\documentclass[12pt]{amsart}

\usepackage{amsmath,amssymb,amsthm,color}
\definecolor{shadecolor}{rgb}{.61,.87,1}

\DeclareMathOperator{\Aff}{Aff}
\DeclareMathOperator{\Spec}{Spec}
\DeclareMathOperator{\Stab}{Stab}
\DeclareMathOperator{\GL}{GL}
\DeclareMathOperator{\SL}{SL}

\newcommand{\sm}{\setminus}

\newcommand{\C}{\mathbb{C}}

\newcommand{\N}{\mathbb{N}}
\newcommand{\Z}{\mathbb{Z}}

\newcommand{\F}{\mathbb{F}}

\newcommand{\I}{\mathcal{O}}
\newcommand{\ve}{\varepsilon}

\newcommand{\1}{^{-1}}

\newcommand{\f}[2]{\frac{#1}{#2}}

\newcommand{\nsg}{\trianglelefteq}

\newtheorem{theorem}{Theorem}[section]
\newtheorem{definition}[theorem]{Definition}

\newtheorem{proposition}[theorem]{Proposition}
\newtheorem{lemma}[theorem]{Lemma}

\numberwithin{theorem}{section}

\title{On exponential sums over orbits in $\F_p^d$}
\author{Sarah Peluse}
\address{Department of Mathematics, Stanford University, Stanford, California 94305}
\email{speluse@stanford.edu}

\begin{document}
\maketitle

\begin{abstract}
This paper proves a bound for exponential sums over orbits of vectors in $\F_p^d$ under subgroups of $\GL_d(\F_p)$. The main tool is a classification theorem for approximate groups due to Gill, Helfgott, Pyber, and Szab\'o.
\end{abstract}

\section{Introduction}
In~\cite{BGK}, Bourgain, Glibichuk, and Konyagin used methods from additive combinatorics to prove that, for every $\delta>0$, there exists an $\ve=\ve_\delta>0$ such that if $H\leq \F_p^\times$ and $|H|\geq p^\delta$, then
\begin{equation}\label{bgk}
\max_{\xi\in\F_p\sm\{0\}}\left|\sum_{x\in H} e_p(\xi x)\right|\leq p^{-\ve}|H|.
\end{equation}
Here $e_p(y):=e^{2\pi i y/p}$. Bounds similar to (\ref{bgk}) were then shown for exponential sums over multiplicative subgroups in $\Z/q\Z$ by Bourgain~\cite{B2}, in $\F_p\times\dots\times\F_p$ by Bourgain~\cite{B1}, and in general finite fields by Bourgain and Chang~\cite{BC}, and for sums of nontrivial additive characters over multiplicative subgroups in finite commutative rings by Bourgain~\cite{B3}. The purpose of this paper is to use recent progress on the classification of approximate groups in linear algebraic groups to prove the following general result, from which the bounds of~\cite{B1} and~\cite{BGK} can be recovered as special cases:
\begin{theorem}\label{MT}
For every $d\in\N$ and $\delta,\beta>0$, there exists an $\ve=\ve_{d,\delta,\beta}>0$ such that the following holds. If $H\leq\GL_d(\F_p)$, $v\in\F_p^d\sm\{0\}$, and the orbit $\I:=Hv$ satisfies
\begin{itemize}
\item $|\I|\geq p^\delta$ and
\item $|\I\cap P|\leq|\I|^{1-\beta}$ for every hyperplane $P\subset\F_p^d$,
\end{itemize}
then
\begin{equation}\label{p}
\max_{\xi\in\F_p^d\sm\{0\}}\left|\sum_{x\in\I}e_p(\xi\cdot x)\right|\leq p^{-\ve}|\I|.
\end{equation}
\end{theorem}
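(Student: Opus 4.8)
The plan is to argue by contradiction. Fix $d,\delta,\beta$ and suppose that, for some $\ve>0$ to be specified at the end in terms of $d,\delta,\beta$, there is a $\xi\in\F_p^d\sm\{0\}$ with $|S(\xi)|>p^{-\ve}|\I|$, where $S(\eta):=\sum_{x\in\I}e_p(\eta\cdot x)$; the goal is to turn this into a violation of one of the two hypotheses on $\I$. The basic observation, used throughout, is that $h\I=\I$ for every $h\in H$, so that $S(h^{\mathsf T}\eta)=\sum_{x\in\I}e_p(\eta\cdot hx)=S(\eta)$; thus $|S|$ is constant on each orbit of the transpose action of $H$ on $(\F_p^d)^*$, and in particular $|S|\ge p^{-\ve}|\I|$ on the entire dual orbit $\I^*:=H^{\mathsf T}\xi$.

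I would first reduce to the case in which $\xi$ annihilates no nonzero proper $H$-invariant subspace of $\F_p^d$. The $H$-invariant subspaces annihilated by $\xi$ are closed under sums, so there is a largest one, $W$; if $v\in W$ then $\I=Hv$ lies in the hyperplane $\xi^{\perp}$, contradicting the non-concentration hypothesis, so $v\notin W$. Passing to $\ol V:=\F_p^d/W$, with $\ol v\ne 0$, $\ol\xi\ne 0$ and $\ol\I:=H\ol v$, the key point is that $H$ permutes the fibres $\I\cap\pi\1(\ol u)$ among one another, so they all have the common size $|\I|/|\ol\I|$; hence the pushforward of $1_\I$ to $\ol V$ equals $\tfrac{|\I|}{|\ol\I|}1_{\ol\I}$, and, since $\xi$ annihilates $W$, $S(\xi)=\tfrac{|\I|}{|\ol\I|}\widehat{1_{\ol\I}}(\ol\xi)$. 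A short computation shows $|\ol\I|\ge|\I|^{\beta}\ge p^{\delta\beta}$ --- a fibre sits inside a hyperplane, so non-concentration of $\I$ controls its size --- and that $\ol\I$ inherits the non-concentration hypothesis with the same exponent $\beta$, while by maximality of $W$ the functional $\ol\xi$ annihilates no nonzero proper $H$-invariant subspace of $\ol V$. Replacing $(\I,\F_p^d,\xi,\delta)$ by $(\ol\I,\ol V,\ol\xi,\delta\beta)$, I may therefore assume that $\xi$ annihilates no nonzero proper $H$-invariant subspace; equivalently, that $\I$ spans $\F_p^d$ and $\I^*$ spans $(\F_p^d)^*$.

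Let $\Lambda\subseteq M_d(\F_p)$ be the unital $\F_p$-subalgebra generated by $H$. Since $\I=Hv$ spans $\F_p^d$ and $\I\subseteq\Lambda v$, the vector $v$ is cyclic for $\Lambda$, i.e. $\Lambda v=\F_p^d$; dually, $\Lambda^{\mathsf T}\xi$ is the $\F_p$-span of $\I^*$, so $\Lambda^{\mathsf T}\xi=(\F_p^d)^*$. An elementary module-theoretic computation then gives $|\Lambda^{\times}v|\ge c_d\,p^{d}$ and $|(\Lambda^{\mathsf T})^{\times}\xi|\ge c_d\,p^{d}$ for a constant $c_d>0$ (using $|\Lambda^{\times}|/|\Lambda|\ge c_d$ together with $|\Lambda|=p^{d}\,|\{\mu\in\Lambda:\mu v=0\}|$, and likewise for $\Lambda^{\mathsf T},\xi$). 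The heart of the argument --- and where the Gill--Helfgott--Pyber--Szab\'o classification enters --- is the claim that the largeness of $S(\xi)$, combined with the fact that $H$ is an honest subgroup of $\GL_d(\F_p)$, forces $H$ to have index at most $p^{C_d\ve}$ in $\Lambda^{\times}$. I would prove this by the usual dictionary (as in \cite{BGK,B1,BC}): from $|S|\ge p^{-\ve}|\I|$ on the $H^{\mathsf T}$-orbit $\I^*$ one extracts, via Parseval, a Balog--Szemer\'edi--Gowers argument and Pl\"unnecke--Ruzsa, additive structure on $\I$ and hence on $H$ inside the matrix ring $M_d(\F_p)$; transporting this through the embedding of $H$ into the affine group $\Aff_d(\F_p)=\F_p^d\rtimes\GL_d(\F_p)\le\GL_{d+1}(\F_p)$ --- which linearises the additive structure of the orbit --- produces a $p^{C_d\ve}$-approximate subgroup, and the Gill--Helfgott--Pyber--Szab\'o classification of such approximate subgroups then pins $H$ down to within index $p^{C_d\ve}$ of the unit group of the subalgebra it generates.

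Granting this, the proof finishes cleanly. Since $\I=Hv$ and $[\Lambda^{\times}:H]\le p^{C_d\ve}$, one has $|\I|\ge p^{-C_d\ve}|\Lambda^{\times}v|\ge c_d\,p^{d-C_d\ve}$, and likewise $|\I^*|=|H^{\mathsf T}\xi|\ge p^{-C_d\ve}|(\Lambda^{\mathsf T})^{\times}\xi|\ge c_d\,p^{d-C_d\ve}$; but Parseval applied to the $|\I^*|$ characters on which $|S|\ge p^{-\ve}|\I|$ gives $|\I^*|\,|\I|\le p^{d+2\ve}$. Combining these, $c_d^2\,p^{2d-2C_d\ve}\le p^{d+2\ve}$, i.e. $p^{d}\le c_d^{-2}\,p^{(2C_d+2)\ve}$, which is false once $\ve$ is small enough in terms of $d$; for $p$ below a threshold depending on $d,\delta,\beta$ one instead argues directly, since the non-concentration hypothesis forces $|S(\xi)|<|\I|$ strictly and there are only finitely many configurations. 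This is the contradiction. The main obstacle is precisely the middle step --- carrying out ``Fourier largeness $\Rightarrow$ additive structure of $\I$ $\Rightarrow$ approximate subgroup of $\Aff_d(\F_p)$ $\Rightarrow$ invoke Gill--Helfgott--Pyber--Szab\'o'' with control polynomial in $p^{\ve}$, and then reading off from the classification exactly the statement that $H$ nearly exhausts $\Lambda^{\times}$. Everything else is bookkeeping, Plancherel, and elementary module theory.
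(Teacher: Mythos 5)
Your reductions and your endgame are essentially sound: the quotient-by-$W$ step, the fibre counting, and the final Parseval computation against the dual orbit $H^{\mathsf T}\xi$ all work (one slip: ``$\xi$ annihilates no nonzero proper $H$-invariant subspace'' does \emph{not} imply that $\I$ spans $\F_p^d$ — for that you must first restrict to $\spn(\I)$, which is an easy extra reduction since non-concentration passes to subspaces; also, with $|H^{\mathsf T}\xi|\ge p^{\delta\beta}$ available from non-concentration, you would not even need the dual half of the cyclicity argument). The genuine gap is the step you yourself flag as the heart: the claim that largeness of $S(\xi)$ forces $[\Lambda^{\times}:H]\le p^{C_d\ve}$, ``read off'' from Theorem~\ref{GHPS}. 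That theorem does not deliver any statement of this kind. Applied to an approximate group $A$ it only produces normal subgroups $H_1\le H_2$ of $\langle A\rangle$ with $H_2/H_1$ nilpotent, $H_1\subset A^k$ and $|A^k\cap H_2|$ large; it says nothing comparing an honest subgroup $H$ with the unit group of the algebra it generates (note $H$ is itself a $1$-approximate group, for which the classification is vacuous, taking $H_1=H_2=H$). Moreover your chain ``large Fourier coefficient $\Rightarrow$ additive structure on $\I$ $\Rightarrow$ additive structure on $H$ inside $M_d(\F_p)$ $\Rightarrow$ approximate subgroup of $\Aff_d(\F_p)$'' hides the real difficulty: Balog--Szemer\'edi--Gowers in this setting naturally gives approximate additive closure of the \emph{spectrum} $\Spec_\alpha(\I)$ (Lemma~\ref{ha}), not of $\I$, and there is no mechanism offered for transferring such structure to the multiplicative group $H$, let alone for pinning $H$ against $\Lambda^{\times}$ to within $p^{C_d\ve}$. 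As written, the pivotal implication is asserted, not proved, and it is not a consequence of the cited classification.

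For comparison, the paper never proves (or uses) any index bound for $H$ in $\Lambda^{\times}$. It runs Balog--Szemer\'edi--Gowers on the sets $A_\alpha=\begin{pmatrix} H & \Spec_\alpha(\I)\\ & 1\end{pmatrix}\subset\Aff_d(\F_p)$, after a pigeonholing over thresholds $\alpha_j$ (Proposition~\ref{P}); it applies Theorem~\ref{GHPS} to the resulting approximate group $B$; the non-concentration hypothesis enters only through the stabilizer bound of Lemma~\ref{stab}; and a commutator computation in the affine group, together with the nilpotency of $H_2/H_1$, forces $H_1$ to contain a nontrivial subspace $V$ of translations, so that the spectrum nearly fills a union of cosets of $V$. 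The contradiction then comes from Lemma~\ref{spec}, which shows $\Spec_\alpha(\I)$ cannot concentrate on cosets of any nontrivial subspace. So the classification's output is aimed at the spectrum, not at $H$; to salvage your outline you would need either an independent proof of your index claim (which looks at least as hard as the theorem itself) or a redirection of the argument along these lines.
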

Clearly, some condition on the intersection of $\I$ with hyperplanes is necessary in Theorem~\ref{MT}. If we allowed $\I\cap\{x\in\F_p^d:\xi\cdot x=c\}$ to be large for some nonzero $\xi\in\F_p^d$, then the sum in~(\ref{p}) could be quite big, due to having many summands equal to $e_p(c)$.

The proofs of~\cite{B2},~\cite{B1},~\cite{B3},~\cite{BC}, and~\cite{BGK} all follow the same general strategy. If (\ref{bgk}) or the corresponding bound fails to hold, one can use the Balog-Szemer\'edi-Gowers Theorem to construct a subset $A$ of the ring $R$ that grows slowly under addition and multiplication. Various properties of this $A$ can be deduced from those of $H$; for example, a lower bound on $|H|$ tells us that $A$ cannot be almost all of $R$. In each of~\cite{B2},~\cite{B1},~\cite{B3},~\cite{BC}, and~\cite{BGK}, a sum-product theorem in $R$ is shown, which places restrictions on $A$. These restrictions end up being incompatible with other properties of $A$, however, and so (\ref{bgk}) must hold. The proof of Theorem~\ref{MT} proceeds in the spirit of~\cite{BGK} and its descendants, but here a classification result of Gill, Helfgott, Pyber, and Szab\'o~\cite{GH} plays the role of a sum-product theorem.

We now briefly review some results on growth in linear algebraic groups. See the surveys of Breuillard~\cite{Br}, Green~\cite{G}, and Helfgott~\cite{H3} for much more detailed descriptions of the history of this subject and its applications. The first major breakthrough was the work of Helfgott~\cite{H1} in $\SL_2(\F_p)$. Helfgott's result essentially says that every approximate group that is not contained in a proper subgroup of $\SL_2(\F_p)$ is either very small or is almost all of $\SL_2(\F_p)$. The same classification of approximate groups was shown in $\SL_2(\F_q)$ by Dinai~\cite{D} and in $\SL_3(\F_p)$ by Helfgott~\cite{H2}. Following this, the result was extended to finite simple groups of Lie type of bounded rank over any finite field independently by Pyber and Szab\'o~\cite{PS} and by Breuillard, Green, and Tao~\cite{BrGT1}. In~\cite{H2}, Helfgott also proved a classification result for all approximate groups in $\SL_3(\F_p)$, not just those that generate the whole group. More generally, the Helfgott-Lindenstrauss conjecture describes a qualitative classification of approximate groups contained in any group. This conjecture was proven by Breuillard, Green, and Tao~\cite{BrGT2}, and a quantitative version for subgroups of $\GL_d(\F_p)$ was shown by Helfgott, Gill, Pyber, and Szab\'o~\cite{GH}:
\begin{theorem}[Gill, Helfgott, Pyber, and Szab\'o~\cite{GH}, Theorem 2]\label{GHPS}
Let $A$ be a subset of $\GL_{d}(\F_p)$. Assume that $I\in A$ and $A=A\1$. Then for every $C\geq 1$, either $|A^3|\geq C|A|$ or there are subgroups $H_1\leq H_2$ in $\GL_d(\F_p)$ and an integer $k\ll_{d} 1$ such that
\begin{itemize}
\item $H_1,H_2\trianglelefteq \langle A\rangle$ and $H_2/H_1$ is nilpotent,
\item $A^k$ contains $H_1$, and
\item $|A^k\cap H_2|\geq C^{-O_{d}(1)}|A|$.
\end{itemize}
\end{theorem}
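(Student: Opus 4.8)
The plan is to induct on $d$, peeling off the algebraic layers of $G:=\langle A\rangle\le\GL_d(\F_p)$ so as to reduce the general case to two known ``atoms'': the product theorem for finite simple groups of Lie type of bounded rank (Pyber--Szab\'o; Breuillard--Green--Tao) and the analysis of growth in solvable subgroups of $\GL_d(\F_p)$ (Gill--Helfgott). We always work in the regime $|A^3|<C|A|$, since otherwise there is nothing to prove, so the first task is to record the standard consequences of slow growth: by Ruzsa's covering and triangle inequalities $|A^m|\le C^{O_m(1)}|A|$ for every fixed $m$, so each $A^m$ is itself an approximate group; and for any $N\nsg G$ the image $\bar A$ of $A$ in $G/N$ satisfies $|\bar A^3|\le C^{O(1)}|\bar A|$ while the fiber satisfies $|A^2\cap N|\gg|A|/|\bar A|$ and $|A|\le C^{O(1)}|\bar A|\,|A^{O(1)}\cap N|$. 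Thus both slow growth and the multiplicative mass bookkeeping are inherited by quotients and by intersections with normal subgroups; this is the mechanism that lets the induction descend a composition series.

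Next I would bring in the algebraic geometry. Let $\mathbf G$ be the Zariski closure of $G$ in $\GL_d$. Using escape from subvarieties together with the Larsen--Pink dimension estimates, one shows that after replacing $A$ by a bounded power and passing to a connected algebraic subgroup $\mathbf H\le\mathbf G^{\circ}$ one has $|A^{O(1)}\cap\mathbf H(\F_p)|$ comparable to $|A|$ and $A$ ``spread out'' in $\mathbf H$: $|A^{m}\cap V(\F_p)|\le|A|^{1-c}$ for every proper subvariety $V\subsetneq\mathbf H$ of bounded degree. Now invoke the structure of $\mathbf H$. Its Levi decomposition $\mathbf H=\mathbf U\rtimes\mathbf L$ exhibits it as built from a unipotent radical $\mathbf U$ (nilpotent), a semisimple part $\mathbf L'$ which is an almost-direct product of simple factors $\mathbf S_1,\dots,\mathbf S_m$ each of rank $\le d$, and an abelian part $\mathbf L/\mathbf L'$ (a torus). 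By the Larsen--Pink classification of finite subgroups of $\GL_d(\F_p)$, every simple composition factor of $G$ is either cyclic, of order $\ll_d 1$, or one of the Lie-type factors $\mathbf S_i(\F_p)$ just described, so there are no ``hidden'' large simple sections to worry about.

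The heart of the argument is to split along these three kinds of pieces. Projecting $A$ onto $\prod_i\mathbf S_i(\F_p)$ and applying the product theorem for simple groups of bounded rank, each coordinate projection $A_i$ either generates a proper subgroup — which lies on a proper subvariety, contradicting the spreading property of Step~2, so this branch is vacuous after the reduction (or is absorbed by the induction on $d$) — or satisfies $|A_i^3|\ge|A_i|^{1+\delta}$, which by slow growth of the quotient forces $|A_i|\ll_{C,d}1$, or is almost all of $\mathbf S_i(\F_p)$, in which case $A_i^{O(1)}=\mathbf S_i(\F_p)$ and, since $\mathbf S_i$ is perfect, a bounded power of $A$ already contains the full $\mathbf S_i(\F_p)$. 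Collecting the simple factors of this last type yields a normal subgroup $S\nsg G$ with $S\subseteq A^{O_d(1)}$. Modulo $S$ (and the bounded junk) $A$ now maps into a solvable-by-bounded group, where the Gill--Helfgott structure theorem for growth in solvable subgroups of $\GL_d(\F_p)$ applies: up to a bounded power and bounded index the image is captured by a nilpotent subgroup, a bounded power of which contains a normal unipotent subgroup. Pulling back and setting $H_1=\langle S,\ (\text{unipotent part})\rangle$ and $H_2=$ the preimage in $G$ of the nilpotent subgroup, one verifies $H_1,H_2\nsg G$, $H_1\subseteq A^{O_d(1)}$, $H_2/H_1$ nilpotent, and $|A^{O_d(1)}\cap H_2|\ge C^{-O_d(1)}|A|$ from the fiber estimates of Step~1.

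The main obstacle, I expect, is twofold. First, the quantitative geometry: making escape from subvarieties and the Larsen--Pink inequality uniform in $p$ and polynomial in $C$, with constants that survive an induction on $d$, is delicate, since one must repeatedly pass between $A$, its bounded powers, and algebraic subgroups while controlling all subvarieties of bounded degree and never losing more than a polynomial-in-$C$ factor. Second, and more seriously, the bookkeeping in the gluing step: after each descent one only knows normality inside an auxiliary group generated by a piece of $A$, so re-establishing that the final $H_1$ and $H_2$ are normal in $\langle A\rangle$ — while simultaneously keeping $H_1$ inside a bounded power of $A$ and keeping $H_2/H_1$ genuinely nilpotent rather than merely solvable — forces one to define these subgroups canonically from the algebraic structure of $\mathbf G$ and to track the normal closure under $A$-conjugation carefully. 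The solvable case (Gill--Helfgott) is itself a substantial subtheorem, which I would treat as a black box but which is where the genuinely new difficulties beyond the simple-group case live, notably the handling of $p$-groups and the coincidence in characteristic $p$ of ``unipotent'' with ``$p$-element''.
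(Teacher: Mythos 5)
This statement is not proved in the paper at all: it is quoted verbatim from Gill--Helfgott--Pyber--Szab\'o~\cite{GH} and used as a black box, so there is no internal proof to compare your argument against; it can only be judged against the strategy in the cited literature, whose broad outline (escape from subvarieties, Larsen--Pink dimension estimates, the bounded-rank product theorem, and a separate solvable/unipotent analysis) your sketch does correctly echo.

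Judged as a proof, however, it has genuine gaps rather than merely deferred routine details. First, you treat the Gill--Helfgott theorem on growth in solvable subgroups of $\GL_d(\F_p)$ as a black box; that result is not an independent ``atom'' sitting beside the theorem you are proving but a major constituent of it, so invoking it concedes a large part of the statement. Second, the gluing step you flag as an ``obstacle'' is exactly where the theorem lives: you need $H_1,H_2$ normal in $\langle A\rangle$ (not in the Zariski closure, nor in a subgroup generated by a bounded power of $A$), you need $H_1\subset A^k$ with $k\ll_d 1$ \emph{independent of $C$}, and you need $H_2/H_1$ nilpotent rather than solvable; your construction $H_1=\langle S,(\text{unipotent part})\rangle$, $H_2=$ preimage of a nilpotent subgroup produces objects whose normality under all of $\langle A\rangle$-conjugation and whose containment in a $C$-independent power of $A$ are asserted, not verified, and the quotient you obtain from a solvable-by-bounded image is a priori only solvable-by-bounded, not nilpotent. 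Third, the dismissal of the ``proper subgroup'' branch of the product theorem (``lies on a proper subvariety, contradicting spreading, so this branch is vacuous'') itself requires a Larsen--Pink-type classification of subgroups of the simple factors by bounded-complexity algebraic subgroups, plus an argument that the spreading property survives the projection to $\prod_i\mathbf S_i(\F_p)$; as written it is circular with the reduction it is meant to justify. So what you have is a plausible roadmap consistent with how the result is actually proved, but the quantitative bookkeeping and the normality/nilpotence endgame --- the parts that distinguish the theorem from its qualitative Breuillard--Green--Tao counterpart --- are missing.
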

Theorem~\ref{GHPS} will be essential in the proof of Theorem~\ref{MT}.

This paper is organized as follows. In Section~\ref{prelims}, we recall some basic notions from arithmetic combinatorics. In Section~\ref{aprxgp}, we construct a family of approximate groups that are closely connected to $H$ and the orbit $\I$. This connection and the structure of $\Aff_d(\F_p)$ as a semidirect product will be key in Section~\ref{info}, where we derive several properties of such approximate groups using Theorem~\ref{GHPS}. In Section~\ref{proof}, we show that if (\ref{p}) does not hold, then there exists an approximate group with conflicting properties, proving Theorem~\ref{MT}.

\section*{Acknowledgments}
The author thanks Zeb Brady, Harald Helfgott, and Kannan Soundararajan for helpful comments on earlier drafts of this paper. This paper has its roots in a project from the 2016 Arizona Winter School, during which the author worked with Brendan Murphy, Vlad Petkov, and Lam Pham to use a result on growth in $\Aff_1(\F_p)$ from~\cite{H3} to prove the Bourgain-Glibichuk-Konyagin bound~\cite{MPPP}. The author is grateful to Harald Helfgott for suggesting that project. 

The author is supported by the National Science Foundation Graduate Research Fellowship Program under Grant No. DGE-114747 and by the Stanford University Mayfield Graduate Fellowship.

\section{Preliminaries}\label{prelims}
Let $G$ be a group and let $A,A'\subset G$ be nonempty subsets. We can then form the product set of $A$ and $A'$,
\[
AA'=\{aa':a\in A\text{ and }a'\in A'\},
\]
the restricted product of $A$ and $A'$ over a subset $E\subset A\times A'$,
\[
A\underset{E}{\cdot} A'=\{aa':(a,a')\in E\},
\]
and the set of inverses of elements in $A$,
\[
A\1=\{a\1:a\in A\}.
\]
For each $m\in\N$, denote the $m$-fold product of $A$ with itself by $A^m$. Approximate groups are a particular type of set that grows slowly under taking products with itself:
\begin{definition}[Tao~\cite{T1}]
Let $G$ be a group and $K\geq 1$. We say that $A\subset G$ is a {\bf $K$-approximate group} if $1\in A$, $A=A\1$, and there exists an $X\subset G$ with $|X|\leq K$ such that $A^2\subset XA$.
\end{definition}
It follows immediately from the definition that if $A$ is a $K$-approximate group and $m\in\N$, then $A^m\subset X^{m-1}A$, and thus $|A^m|\leq K^{m-1}|A|$. In addition, $A^{2m}\subset X^{m}A^m$, so that $A^m$ is a $K^m$-approximate group. The Balog-Szemer\'edi-Gowers Theorem, which in the noncommutative setting is due to Tao~\cite{T1}, tells us that if a set grows slowly, then a large part of it is contained in a coset of an approximate group:
\begin{theorem}[Balog-Szemer\'edi-Gowers]
Let $G$ be any group and let $A$ be a finite subset of $G$. If $K\geq 1$ and there exists a subset $E\subset A\times A$ such that
\[
|E|\geq K^{-1}|A|^2 \text{ and } |A\underset{E}{\cdot}A|\leq K|A|,
\]
then there exists a $g\in G$ and a $O(K^{O(1)})$-approximate group $B\subset G$ such that $|B|\ll K^{O(1)}|A|$ and $|A\cap gB|\gg K^{-O(1)}|A|$.
\end{theorem}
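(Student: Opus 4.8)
The plan is to adapt Tao's non-commutative version~\cite{T1} of the Balog-Szemer\'edi-Gowers argument, the two ingredients of which are a graph-theoretic lemma on paths in dense bipartite graphs and the non-commutative counterpart of the Ruzsa-Pl\"unnecke calculus. First I would convert the hypothesis into an energy estimate. Writing $P:=A\underset{E}{\cdot}A$ and, for $x\in P$, $f(x):=\#\{(a,a')\in E:aa'=x\}$, one has $\sum_x f(x)=|E|\geq K^{-1}|A|^2$ while $|P|\leq K|A|$, so Cauchy-Schwarz gives
\[
\sum_x f(x)^2\ \geq\ \frac{\left(\sum_x f(x)\right)^2}{|P|}\ \geq\ K^{-3}|A|^3 .
\]
Form the bipartite graph $\Gamma$ with vertex classes $A$ and $P$ and an edge $a\sim x$ whenever $a^{-1}x\in\{a':(a,a')\in E\}$; then $f(x)$ is the degree of $x$ in $\Gamma$, the edge count is $\geq K^{-1}|A|^2$ (so $\Gamma$ has density $\gg K^{-2}$, and since the average $A$-degree is $\geq K^{-1}|A|$ we also get $|P|\geq K^{-1}|A|$), and $\sum_x f(x)^2\geq K^{-3}|A|^3$ counts the paths of length two in $\Gamma$ whose midpoint lies in $P$.

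Next I would apply the Balog-Szemer\'edi-Gowers graph lemma -- a dependent-random-choice, second-moment argument -- which produces a set $A'\subseteq A$ with $|A'|\gg K^{-O(1)}|A|$ such that \emph{every} pair $a_1,a_2\in A'$ is joined in $\Gamma$ by $\gg K^{-O(1)}|A|\,|P|^2\geq K^{-O(1)}|A|^3$ paths of length four. A length-four path $a_1\sim x\sim a_3\sim y\sim a_2$ witnesses that $a_1^{-1}x,\ a_3^{-1}x,\ a_3^{-1}y,\ a_2^{-1}y$ all lie in $A$ and that
\[
a_1^{-1}a_2=(a_1^{-1}x)(a_3^{-1}x)^{-1}(a_3^{-1}y)(a_2^{-1}y)^{-1};
\]
moreover distinct such paths give distinct quadruples $(a_1^{-1}x,a_3^{-1}x,a_3^{-1}y,a_2^{-1}y)\in A^4$. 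Hence every element of $(A')^{-1}A'$ has $\gg K^{-O(1)}|A|^3$ representations of the form $e_1e_2^{-1}e_3e_4^{-1}$ with $e_i\in A$, and counting the $|A|^4$ possible quadruples yields $|(A')^{-1}A'|\ll K^{O(1)}|A|$.

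It remains to turn this into the stated conclusion. From the representation structure one manufactures a symmetric set $S$ with $1\in S$, $|S|\ll K^{O(1)}|A|$, and small \emph{tripling}, $|S^3|\ll K^{O(1)}|S|$; this step is the delicate one and is carried out in~\cite{T1} by iterated uses of the Ruzsa triangle inequality together with covering arguments, all of which remain valid in an arbitrary group. Once $|S^3|\ll K^{O(1)}|S|$, a routine Ruzsa triangle bound gives $|S^5|\ll K^{O(1)}|S|$, and Ruzsa's covering lemma then covers $S^4$ by $O(K^{O(1)})$ left-translates of $S^2$; thus $B:=S^2$ is an $O(K^{O(1)})$-approximate group with $|B|\ll K^{O(1)}|A|$, and one last covering argument places a $\gg K^{-O(1)}$-fraction of $A$ inside a single translate $gB$.

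The step I expect to be the main obstacle is the production of a symmetric set with small tripling. Path-counting, by itself, only yields a small \emph{difference} set $(A')^{-1}A'$, and iterating it controls higher product sets only up to an extra factor of order $|A|$; but a usable approximate group requires genuine tripling, and in a non-abelian group one has neither Pl\"unnecke's inequality nor the freedom to rearrange factors to bridge this gap. This is precisely why the graph lemma must be run with length-four rather than length-two paths -- so as to govern \emph{all} pairs from a large subset -- and why the conclusion features an approximate group in place of a subgroup.
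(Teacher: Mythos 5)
Your outline is essentially the same route the paper relies on: the paper does not prove this theorem at all, but quotes it from Tao's noncommutative Balog--Szemer\'edi--Gowers theorem (Theorem 5.4 of \cite{T1}, see also \cite{TV}), and your sketch --- energy estimate, the graph lemma giving many length-four paths, the many-representations count for $(A')^{-1}A'$, and then Ruzsa-type covering to produce the approximate group --- is precisely the argument of that source, with the genuinely delicate step (passing from the representation structure to a symmetric set of small tripling) correctly identified and deferred to \cite{T1}. The steps you do spell out (the Cauchy--Schwarz bound, the identity $a_1^{-1}a_2=(a_1^{-1}x)(a_3^{-1}x)^{-1}(a_3^{-1}y)(a_2^{-1}y)^{-1}$, the injectivity of the path-to-quadruple map, and the covering argument making $S^2$ an approximate group) are all correct.
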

There are many ways of formulating this result; see Theorem 5.4 of~\cite{T1} or Section 2.7 of~\cite{TV}. 

We will also need a small amount of background on Fourier analysis on $\F_p^d$. For every $0\leq \alpha\leq 1$ and $A\subset\F_p^d$, set
\[
\Spec_\alpha(A):=\left\{\xi\in\F_p^d:\left|\f{1}{|A|}\sum_{x\in A}e_p(\xi\cdot x)\right|>\alpha\right\}.
\]
Rephrased using this notation, the goal of this paper is to prove that there exists an $\ve=\ve_{d,\delta,\beta}>0$ such that for all primes $p$ and all $\I$ as in Theorem~\ref{MT}, $\Spec_{p^{-\ve}}(\I)=\{0\}$. Clearly, $\Spec_{\alpha}(A)$ is symmetric and contains $0$. The following lemma, due to Bourgain, says that $\Spec_{\alpha}(A)$ is also approximately closed under taking differences:
\begin{lemma}\label{ha}
Let $A\subset\F_p^d$. For every $0\leq \alpha\leq 1$, we have
\[
|\{(\xi_1,\xi_2)\in \Spec_{\alpha}(A)\times \Spec_{\alpha}(A):\xi_1-\xi_2\in\Spec_{\alpha^2/2}(A)\}|\geq\f{\alpha^2}{2}|\Spec_{\alpha}(A)|^2.
\]
\end{lemma}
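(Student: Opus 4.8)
The plan is to exploit a second-moment (energy) computation on the Fourier side, following the standard argument for such "approximate closure of the spectrum" statements. Write $N := |A|$ and let $f := 1_A$. For a parameter to be chosen, the key quantity to estimate is
\[
S := \sum_{\xi_1,\xi_2\in\Spec_\alpha(A)} \left|\frac{1}{N}\sum_{x\in A} e_p((\xi_1-\xi_2)\cdot x)\right|^2.
\]
On one hand, expanding the square and swapping the order of summation gives
\[
S = \frac{1}{N^2}\sum_{x,y\in A}\left|\sum_{\xi\in\Spec_\alpha(A)} e_p(\xi\cdot(x-y))\right|^2,
\]
which is manifestly nonnegative, but more to the point is bounded above by $\frac{1}{N^2}\cdot N^2 \cdot |\Spec_\alpha(A)|^2 = |\Spec_\alpha(A)|^2$ — wait, that is too lossy. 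The right move is instead to bound $S$ from below: since every $\xi \in \Spec_\alpha(A)$ satisfies $|\widehat{1_A}(\xi)/N| > \alpha$, and more importantly, for the diagonal-type contribution we use that each pair contributes according to whether $\xi_1 - \xi_2$ lands in $\Spec_{\alpha^2/2}(A)$ or not.

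Here is the cleaner route I would actually carry out. Split the sum over pairs $(\xi_1,\xi_2) \in \Spec_\alpha(A)^2$ according to whether $\xi_1 - \xi_2 \in \Spec_{\alpha^2/2}(A)$. For pairs where it is \emph{not} in $\Spec_{\alpha^2/2}(A)$, the corresponding summand in $S$ is at most $(\alpha^2/2)^2 \le \alpha^2/2$. Thus
\[
S \le |\{(\xi_1,\xi_2): \xi_1-\xi_2\in\Spec_{\alpha^2/2}(A)\}| + \frac{\alpha^2}{2}|\Spec_\alpha(A)|^2.
\]
On the other hand, I claim $S \ge \alpha^2 |\Spec_\alpha(A)|^2$: indeed, by the rearrangement above,
\[
S = \frac{1}{N^2}\sum_{x,y\in A}\Big|\sum_{\xi\in\Spec_\alpha(A)}e_p(\xi\cdot x)\overline{e_p(\xi\cdot y)}\Big|^2 \ge \frac{1}{N^2}\Big|\sum_{x,y\in A}\sum_{\xi}e_p(\xi\cdot x)\overline{e_p(\xi\cdot y)}\Big|^2 \cdot \frac{1}{N^2}
\]
is not quite it either; the honest lower bound comes from dropping all off-diagonal $x=y$ terms is also wrong. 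The correct lower bound: restrict $S$ to the single pair... no. The genuinely correct lower bound is to note
\[
S \ge \sum_{\xi_1,\xi_2\in\Spec_\alpha(A),\ \xi_1=\xi_2} 1 = |\Spec_\alpha(A)|
\]
which is too weak. So the real lower bound must come from Parseval: $S = \frac{1}{N^2}\sum_{\xi\in\F_p^d} \big(1_{\Spec_\alpha(A)} * \widetilde{1_{\Spec_\alpha(A)}}\big)(\xi)\,|\widehat{1_A}(\xi)|^2 \cdot(\text{normalization})$, and since $\sum_\xi |\widehat{1_A}(\xi)|^2 = p^d N$ while the convolution is supported suitably... Rather than belabor this, the structure of the argument is: \textbf{lower-bound $S$ by $\alpha^2|\Spec_\alpha(A)|^2$} using that each of the $|\Spec_\alpha(A)|^2$ pairs $(\xi_1,\xi_2)$ could be collapsed via a clever insertion of the defining inequality $|\widehat{1_A}(\xi_i)| > \alpha N$ twice — concretely, $\sum_{x\in A}e_p((\xi_1-\xi_2)\cdot x)$ relates to $\frac{1}{N}\sum_{\eta}\widehat{1_A}(\eta+\xi_1)\overline{\widehat{1_A}(\eta+\xi_2)}$ type identities — and \textbf{upper-bound $S$} by the split above. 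Comparing the two bounds,
\[
\alpha^2|\Spec_\alpha(A)|^2 \le |\{(\xi_1,\xi_2):\xi_1-\xi_2\in\Spec_{\alpha^2/2}(A)\}| + \tfrac{\alpha^2}{2}|\Spec_\alpha(A)|^2,
\]
and rearranging gives exactly the claimed bound $\frac{\alpha^2}{2}|\Spec_\alpha(A)|^2$.

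The main obstacle — and the step I would be most careful about — is establishing the lower bound $S \ge \alpha^2|\Spec_\alpha(A)|^2$ with the correct constant. The clean way is: for each fixed $\xi_1 \in \Spec_\alpha(A)$, consider $T(\xi_1) := \frac{1}{N}\sum_{x\in A} e_p(\xi_1\cdot x)$, which has $|T(\xi_1)| > \alpha$; then $\sum_{\xi_2\in\Spec_\alpha(A)}\big|\frac{1}{N}\sum_{x}e_p((\xi_1-\xi_2)\cdot x)\big|^2 = \frac{1}{N^2}\sum_{x,y\in A}e_p(\xi_1\cdot(x-y))\sum_{\xi_2}e_p(-\xi_2\cdot(x-y))$, and I want to re-sum so that a factor $|T(\xi_1)|^2 > \alpha^2$ emerges after summing over $\xi_1$ as well — this requires an honest application of Parseval / positivity of the Fourier transform of $1_A * \widetilde{1_A}$, namely that $\sum_x 1_A*\widetilde{1_A}(x) e_p(\eta\cdot x) = |\widehat{1_A}(\eta)|^2 \ge 0$. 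Pushing this through correctly, with attention to the normalization by $|A|$ in the definition of $\Spec_\alpha$, is the crux; everything else is bookkeeping.
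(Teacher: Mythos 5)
There is a genuine gap at exactly the step you flag as the crux: the lower bound $S\geq\alpha^2|\Spec_\alpha(A)|^2$ for your second-moment quantity $S=\sum_{\xi_1,\xi_2\in\Spec_\alpha(A)}\bigl|\frac{1}{N}\sum_{x\in A}e_p((\xi_1-\xi_2)\cdot x)\bigr|^2$ is never established, and none of the routes you sketch (dropping to the diagonal, Parseval heuristics, ``positivity of $1_A*\widetilde{1_A}$'') produces it. Worse, the bound is not just unproven but is the wrong thing to aim for: writing $g(\xi)=\frac{1}{N}\sum_{x\in A}e_p(\xi\cdot x)$, the natural argument (Cauchy--Schwarz over the pairs $(x,y)$, using $\sum_{x,y\in A}e_p(\xi\cdot(x-y))=|\sum_{x\in A}e_p(\xi\cdot x)|^2\geq\alpha^2N^2$ on the spectrum) only yields $S\geq\alpha^4|\Spec_\alpha(A)|^2$, and in regimes where most differences $\xi_1-\xi_2$ have $|g|\approx\alpha^2$ one indeed expects $S\approx\alpha^4|\Spec_\alpha(A)|^2$, so the constant $\alpha^2$ you need is not available for the squared sum. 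Feeding $\alpha^4$ into your splitting argument gives only $\gg\alpha^4|\Spec_\alpha(A)|^2$ pairs, which is strictly weaker than the lemma.

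The missing idea is to work with the \emph{first} power and remove the absolute values with unimodular weights before applying Cauchy--Schwarz in the physical variable (this is the argument in Tao--Vu, Section 4.6, which the paper cites rather than reproves). Choose unimodular $c_\xi$ with $c_\xi g(\xi)=|g(\xi)|>\alpha$ for $\xi\in\Spec_\alpha(A)$; then $\alpha|\Spec_\alpha(A)|<\frac{1}{N}\sum_{x\in A}\sum_{\xi\in\Spec_\alpha(A)}c_\xi e_p(\xi\cdot x)$, and Cauchy--Schwarz over $x\in A$ (with the $\frac{1}{N}$ normalization) gives
\[
\alpha^2|\Spec_\alpha(A)|^2<\frac{1}{N}\sum_{x\in A}\Bigl|\sum_{\xi\in\Spec_\alpha(A)}c_\xi e_p(\xi\cdot x)\Bigr|^2\leq\sum_{\xi_1,\xi_2\in\Spec_\alpha(A)}|g(\xi_1-\xi_2)|.
\]
Now your splitting step works as intended: pairs with $\xi_1-\xi_2\notin\Spec_{\alpha^2/2}(A)$ contribute at most $\alpha^2/2$ each, so they account for at most $\frac{\alpha^2}{2}|\Spec_\alpha(A)|^2$, and the remaining pairs (each contributing at most $1$) must number at least $\frac{\alpha^2}{2}|\Spec_\alpha(A)|^2$. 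Your upper-bound decomposition is fine; it is only the lower bound, and the decision to square the summand, that breaks the argument.
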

See Section 4.6 of~\cite{TV} for a proof.

\section{A family of approximate groups}\label{aprxgp}
Suppose that $H\leq\GL_d(\F_p)$ and $v\in\F_p^d\sm\{0\}$. For ease of notation, we would like for the action of $H$, not $H^T$, on the $\xi$ to preserve the value of the sum in (\ref{p}). Thus, we will instead prove (\ref{p}) for the orbit $\I:=H^Tv=\{M^Tv:M\in H\}$, assuming it satisfies the conditions stated in Theorem~\ref{MT}. This means that $\Spec_{\alpha}(\I)$ is preserved by the action of $H$ for each $\alpha$, a fact that will be important in the proof of Proposition~\ref{P} below, and also later in Section~\ref{info}.

For each $\alpha$, embed the set $H\times\Spec_\alpha(\I)$ in $G:=\Aff_d(\F_p)$ as
\[
A_{\alpha}:=\begin{pmatrix}
H & \Spec_\alpha(\I) \\
 & 1
\end{pmatrix}.
\]
Note that, since $\Spec_\alpha(\I)$ is symmetric and $H$-invariant, $A_\alpha=A_{\alpha}\1$ for all $\alpha$. In the following proposition, we show that $A_\alpha$ grows slowly for certain choices of $\alpha$, and thus has large intersection with a coset of an approximate group. Our method of doing this is essentially identical to the one used in~\cite{MPPP}, which worked in $\Aff_1(\F_p)$. The proof uses an iteration and pigeonholing argument that was inspired by the proof of the Bourgain-Glibichuk-Konyagin bound given in~\cite{TV}.
\begin{proposition}\label{P}
For every $\ve'>0$, there exists a $g\in G$ and a $O_{d,\ve'}(p^{O(\ve')})$-approximate group $B\subset G$ satisfying
\[
|B|\ll_{d,\ve'}p^{O(\ve')}|A_{\phi_d(\ve')}|
\]
and
\[
|A_{\phi_d(\ve')}\cap gB|\gg_{d,\ve'} p^{-O(\ve')}|A_{\phi_d(\ve')}|,
\]
where $\phi_d(\ve')=2^{1-c_{d,\ve'}}p^{-\ve'c_{d,\ve'}/2^{\lceil 2d/\ve'\rceil}}$ for some $1\leq c_{d,\ve'}\leq 2^{\lceil 2d/\ve'\rceil}$.
\end{proposition}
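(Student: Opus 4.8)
The plan is to reduce Proposition~\ref{P} to a single application of the Balog-Szemer\'edi-Gowers Theorem. It suffices to produce a value $\alpha=\phi_d(\ve')$ of the stated form together with a subset $E\subset A_\alpha\times A_\alpha$ for which $|E|\geq K\1|A_\alpha|^2$ and $|A_\alpha\underset{E}{\cdot}A_\alpha|\leq K|A_\alpha|$ for some $K=O_{d,\ve'}(p^{O(\ve')})$; the desired $g$ and $B$ are then exactly the output of that theorem applied to $A=A_\alpha$. The feature of $G=\Aff_d(\F_p)$ that makes this tractable is its semidirect product structure: since $\begin{pmatrix}M_1&\xi_1\\&1\end{pmatrix}\begin{pmatrix}M_2&\xi_2\\&1\end{pmatrix}=\begin{pmatrix}M_1M_2&M_1\xi_2+\xi_1\\&1\end{pmatrix}$ and $M_1M_2\in H$ automatically, a product of two elements of $A_\alpha$ carries new information only in its translation entry $M_1\xi_2+\xi_1$, which lies in $\Spec_\alpha(\I)+\Spec_\alpha(\I)$; Lemma~\ref{ha} is designed precisely to control such a sumset.

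For an arbitrary $\alpha\in(0,1]$ I would let $E$ be the set of pairs in $A_\alpha\times A_\alpha$ whose product has translation entry in $\Spec_{\alpha^2/2}(\I)$, so that $A_\alpha\underset{E}{\cdot}A_\alpha\subset A_{\alpha^2/2}$ and hence $|A_\alpha\underset{E}{\cdot}A_\alpha|\leq|H|\,|\Spec_{\alpha^2/2}(\I)|$. To bound $|E|$ from below, fix $M_1,M_2\in H$; because $\Spec_\alpha(\I)$ is $H$-invariant, the map $\xi_2\mapsto M_1\xi_2$ permutes $\Spec_\alpha(\I)$, so the number of $(\xi_1,\xi_2)\in\Spec_\alpha(\I)^2$ with $M_1\xi_2+\xi_1\in\Spec_{\alpha^2/2}(\I)$ equals the number of $(\xi_1,\eta)\in\Spec_\alpha(\I)^2$ with $\xi_1+\eta\in\Spec_{\alpha^2/2}(\I)$; replacing $\eta$ by $-\eta$ (legitimate since $\Spec_\alpha(\I)$ is symmetric) and invoking Lemma~\ref{ha}, this count is at least $\tfrac{\alpha^2}{2}|\Spec_\alpha(\I)|^2$. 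Summing over the $|H|^2$ choices of $(M_1,M_2)$ yields $|E|\geq\tfrac{\alpha^2}{2}|A_\alpha|^2$.

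It remains to choose $\alpha$ so that both $2/\alpha^2$ and $|\Spec_{\alpha^2/2}(\I)|/|\Spec_\alpha(\I)|$ are of size $p^{O(\ve')}$ up to an $O_{d,\ve'}(1)$ factor. Set $K=\lceil 2d/\ve'\rceil$ and $\alpha_j=2^{1-2^j}p^{-\ve'2^j/2^K}$ for $0\leq j\leq K$; these satisfy $\alpha_j\in(0,1]$ and $\alpha_{j+1}=\alpha_j^2/2$. Let $j_0$ be the least index with $|\Spec_{\alpha_{j_0+1}}(\I)|\leq p^{\ve'}|\Spec_{\alpha_{j_0}}(\I)|$. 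Such a $j_0\leq K-1$ must exist, since otherwise every step from $0$ through $K-1$ would multiply the size of the spectrum by more than $p^{\ve'}$, forcing $|\Spec_{\alpha_K}(\I)|>p^{K\ve'}|\Spec_{\alpha_0}(\I)|\geq p^{2d}$, which is absurd as $\Spec_{\alpha_K}(\I)\subset\F_p^d$. Put $\phi_d(\ve'):=\alpha_{j_0}$ and $c_{d,\ve'}:=2^{j_0}\in[1,2^K]$. With $\alpha=\alpha_{j_0}$, the bounds above become $|E|\geq\tfrac{\alpha^2}{2}|A_\alpha|^2$ and $|A_\alpha\underset{E}{\cdot}A_\alpha|\leq p^{\ve'}|A_\alpha|$ (using $\alpha_{j_0}^2/2=\alpha_{j_0+1}$ and the stopping condition), so the Balog-Szemer\'edi-Gowers Theorem applies with $K_0:=\max(2/\alpha^2,p^{\ve'})$. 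Finally, $j_0\leq K-1$ gives $2/\alpha_{j_0}^2=2^{2^{j_0+1}-1}p^{\ve'2^{j_0+1}/2^K}\leq 2^{2^K}p^{\ve'}$, so $K_0=O_{d,\ve'}(p^{\ve'})$ and $K_0^{O(1)}=O_{d,\ve'}(p^{O(\ve')})$, which is exactly the dependence demanded in the conclusion.

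The one genuinely delicate point is the iteration. The threshold $\alpha$ must be small enough that the passage from $\Spec_\alpha(\I)$ to $\Spec_{\alpha^2/2}(\I)$ is nearly lossless, yet large enough that the factor $\tfrac{\alpha^2}{2}$ produced by Lemma~\ref{ha} is only $p^{O(\ve')}$; the dichotomy ``the spectrum either grows by more than a factor $p^{\ve'}$ in one step or it does not'' underlies a pigeonholing argument in the style of~\cite{BGK} and~\cite{TV}, and the crude bound $|\Spec_\bullet(\I)|\leq p^d$ guarantees termination. The explicit sequence $\alpha_j$ is engineered so that $K=\lceil 2d/\ve'\rceil$ steps always suffice while keeping every $\alpha_j$ above $2^{-2^K}p^{-\ve'}$, which is what caps $2/\alpha^2$. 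Everything else---the semidirect-product identity, the $H$-invariance and symmetry of $\Spec_\alpha(\I)$, and the two black-box inputs (Lemma~\ref{ha} and Balog-Szemer\'edi-Gowers)---is routine bookkeeping once this choice is in place.
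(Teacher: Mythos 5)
Your proof is correct and follows essentially the same route as the paper: use Lemma~\ref{ha} together with the semidirect-product structure and the symmetry and $H$-invariance of $\Spec_\alpha(\I)$ to build a large restricted product set landing in $A_{\alpha^2/2}$, pigeonhole along the sequence $\alpha_{j+1}=\alpha_j^2/2$ to find a step where the spectrum grows by at most $p^{O(\ve')}$, and then apply the Balog--Szemer\'edi--Gowers Theorem. The only differences (defining $E$ directly as all pairs whose product's translation entry lies in $\Spec_{\alpha^2/2}(\I)$ rather than as a disjoint union of parametrized sets, and using the growth threshold $p^{\ve'}$ in place of $p^{d/J}$) are cosmetic.
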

\begin{proof}
Let $J\in\N$ and $\ve_0>0$, to be chosen later. Set $\alpha_0=p^{-\ve_0}$ and $\alpha_j=\f{\alpha_{j-1}^2}{2}$ for each $1\leq j\leq J$, so $\alpha_j=2^{1-2^j}p^{-2^j\ve_0}$. We will first show that, for every $0\leq j\leq J-1$, there exists an $E_j\subset A_{\alpha_j}\times A_{\alpha_j}$ such that
\[
|E_j|\geq\f{\alpha_j^2}{2}|A_{\alpha_j}|^2\text{ and }A_{\alpha_j}\underset{E_j}{\cdot}A_{\alpha_j}\subset A_{\alpha_{j+1}}.
\]
Apply Lemma~\ref{ha} to $\Spec_{\alpha_j}(\I)$. This gives us a subset $F_j$ of $\Spec_{\alpha_j}(\I)\times\Spec_{\alpha_j}(\I)$ such that
\[
\Spec_{\alpha_j}(\I)\underset{F_j}{-}\Spec_{\alpha_j}(\I)\subset\Spec_{\alpha_{j+1}}(\I)
\]
and $|F_j|\geq\f{\alpha_j^2}{2}|\Spec_{\alpha_j}(\I)|^2$. For every $(\xi_1,\xi_2)\in F_j$, define
\[
E_{(\xi_1,\xi_2)}=\left\{\left(\begin{pmatrix} M_1 & \xi_1 \\
& 1 
\end{pmatrix},
\begin{pmatrix} M_2 & -M_1\1\xi_2 \\
& 1
\end{pmatrix}\right): M_1,M_2\in H\right\},
\]
so that if $(N_1,N_2)\in E_{(\xi_1,\xi_2)}$, then $N_1N_2\in A_{\alpha_{j+1}}$. Since $\Spec_{\alpha_j}(\I)$ is symmetric and $H$-invariant, $E_{(\xi_1,\xi_2)}\subset A_{\alpha_j}\times A_{\alpha_j}$. Note that we can recover $(\xi_1,\xi_2)$ by knowing any element of $E_{(\xi_1,\xi_2)}$, so for $(\xi_1,\xi_2)\neq(\eta_1,\eta_2)$, the sets $E_{(\xi_1,\xi_2)}$ and $E_{(\eta_1,\eta_2)}$ are disjoint. Take
\[
E_j=\coprod_{(\xi_1,\xi_2)\in F_j}E_{(\xi_1,\xi_2)}.
\]
Then
\[
|E_j|=|F_j||H|^2\geq\f{\alpha_j^2}{2}|\Spec_{\alpha_j}(\I)|^2|H|^2=\f{\alpha_j^2}{2}|A_{\alpha_j}|^2,
\]
and since $A_{\alpha_j}\underset{E_j}{\cdot} A_{\alpha_j}\subset A_{\alpha_{j+1}}$ by construction, we have our desired set $E_j$ for each $0\leq j\leq J-1$.

Now, because $\alpha_j$ decreases as $j$ increases, $|\Spec_{\alpha_j}(\I)|$ increases as $j$ increases. Thus, because $|\Spec_{\alpha}(\I)|\leq p^d$ for all $\alpha$, there must exist a $0\leq j\leq J-1$ such that $|\Spec_{\alpha_{j+1}}(\I)|\leq p^{d/J}|\Spec_{\alpha_j}(\I)|$. As a consequence, for this $j$ we have the bounds
\[
|E_j|\geq\f{\alpha_j^2}{2}|A_{\alpha_j}|^2\text{ and }|A_{\alpha_j}\underset{E_j}{\cdot} A_{\alpha_j}|\leq p^{d/J}|A_{\alpha_j}|.
\]
Apply the Balog-Szemer\'edi-Gowers Theorem to $A_{\alpha_j}$ with $K=\f{2}{\alpha_j^2}p^{d/J}$. This yields a $g\in G$ and a $O((2^{2^j}p^{2^j\ve_0+d/J})^{O(1)})$-approximate group $B\subset G$ such that $|B|\ll(2^{2^j}p^{2^j\ve_0+d/J})^{O(1)}|A_{\alpha_j}|$ and $|A_{\alpha_j}\cap gB|\gg (2^{2^j}p^{2^j\ve_0+d/J})^{-O(1)}|A_{\alpha_j}|$, since $\alpha_j=2^{1-2^j}p^{-2^j\ve_0}$. Now take $J=\lceil\f{2d}{\ve'}\rceil$ and $\ve_0=\f{\ve'}{2^J}$. So, setting $\phi_d(\ve'):=\alpha_j=2^{1-c_{d,\ve'}}p^{-c_{d,\ve'}\ve_0}$ where $c_{d,\ve'}=2^j\in[0,2^J]$, we have that $B$ is a $O_{d,\ve'}(p^{O(\ve')})$-approximate group, $|B|\ll_{d,\ve'}p^{O(\ve')}|A_{\phi_d(\ve')}|$, and $|A_{\phi_d(\ve')}\cap gB|\gg_{d,\ve'}p^{-O(\ve')}|A_{\phi_d(\ve')}|$.
\end{proof}

\section{Properties of the approximate groups}\label{info}
For any subset $A\subset G$, set
\[
L(A):=\left\{M\in\GL_d(\F_p):\begin{pmatrix} M & \xi \\ & 1 \end{pmatrix}\in A\text{ for some }\xi\in\F_p^d\right\},
\]
and for any $M\in L(A)$, set 
\[
R_M(A):=\left\{\xi\in\F_p^d:\begin{pmatrix} M & \xi \\ & 1 \end{pmatrix}\in A\right\}.
\]
Similarly, for any $g\in G$, write $L(g)$ for the upper left-hand $d\times d$ block of $g$ and $R(g)$ for the upper right-hand $d\times 1$ block of $g$. In this section, we use Theorem~\ref{GHPS} and the structure of $G$ as a semidirect product to show that any $O_{d,\ve'}(p^{O(\ve')})$-approximate group $B$ with a coset that is close to $A_{\phi_d(\ve')}$ must be of a certain form. There exists a $D\ll_d 1$ such that whenever $\Spec_{\phi_d(\ve')}(\I)\neq\{0\}$, $\ve'$ is small enough, $p$ is large enough, and $M\in L(B)$, we have that $R_M(B^{D})$ is almost equal to a disjoint union of translates of some nontrivial subspace of $\F_p^d$. At the same time, each $R_M(B^{D})$ is close to the image of $\Spec_{\phi_d(\ve')}(\I)$ under a linear map. These two facts will be combined to derive a contradiction in Section~\ref{proof}.

We will repeatedly use two simple observations about $L$ and $R$. The first is that, if $A,A'\subset G$ and $g\in A\cap A'$, then $L(g)\in L(A\cap A')$ and $R(g)\in R_{L(g)}(A\cap A')$. Thus,
\[
|A\cap A'|\leq|L(A\cap A')|\max_{N\in L(A\cap A')}|R_N(A\cap A')|.
\]
The second observation is that there exist at least $|L(A)|$ elements $M\in L(A^2)$ for which $|R_M(A^2)|\geq\max_{N\in L(A)}|R_N(A)|$. This is because, if $|R_{N_0}(A)|=\max_{N\in L(A)}|R_N(A)|$ and $N_1,N_2\in L(A)$ are distinct, then
\[
\begin{pmatrix} N_0 & R_{N_0}(A) \\ & 1 \end{pmatrix}\cdot\begin{pmatrix} N_1 & R_{N_1}(A) \\ & 1 \end{pmatrix}=\begin{pmatrix} N_0N_1 & N_0R_{N_1}(A)+R_{N_0}(A) \\ & 1\end{pmatrix}
\]
and
\[
\begin{pmatrix} N_0 & R_{N_0}(A) \\ & 1 \end{pmatrix}\cdot\begin{pmatrix} N_2 & R_{N_2}(A) \\ & 1 \end{pmatrix}=\begin{pmatrix} N_0N_2 & N_0R_{N_2}(A)+R_{N_0}(A) \\ & 1\end{pmatrix}
\]
are disjoint subsets of $A^2$, each of cardinality at least $|R_{N_0}(A)|$. Thus, we have that
\[
|A^2|\geq |L(A)|\max_{N\in L(A)}|R_N(A)|.
\]
Similarly, if $A=A\1$, then $|R_M(A^3)|\geq\max_{N\in L(A)}|R_N(A)|$ for all $M\in L(A)$. This is because the sets
\[
\begin{pmatrix} M & R_{M}(A) \\ & 1 \end{pmatrix}\cdot\begin{pmatrix} N_0 & R_{N_0}(A) \\ & 1 \end{pmatrix}\cdot\begin{pmatrix} N_0\1 & R_{N_0\1}(A) \\ & 1 \end{pmatrix},
\]
which are contained in $A^3$, have size at least $\max_{N\in L(A)}|R_N(A)|$ for every $M\in L(A)$.

The following lemma records some immediate consequences of these observations when an approximate group is involved.
\begin{lemma}\label{blocks}
Let $A,B\subset G$. Suppose that $B$ is a $K$-approximate group such that $|B|\leq K|A|$ and $|A\cap B|\geq K\1 |A|$. Then,
\[
|L(B)|\leq K^3|L(A)\cap L(B)|,
\]
\[
|L(A)\cap L(B)|\geq K\1\f{\min_{N\in L(A)}|R_N(A)|}{\max_{N\in L(A)}|R_N(A)|}|L(A)|,
\]
\[
\max_{N\in L(B)}|R_N(B)|\leq K^3\max_{N\in L(A\cap B)}|R_N(A)\cap R_N(B)|,
\]
and
\[
\max_{N\in L(A\cap B)}|R_N(A)\cap R_N(B)|\geq K\1\min_{N\in L(A)}|R_N(A)|.
\]
\end{lemma}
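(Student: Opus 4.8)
The plan is to extract all four inequalities mechanically from the "two simple observations" recorded before the statement of Lemma~\ref{blocks}, using only the hypotheses $|B|\leq K|A|$, $|A\cap B|\geq K^{-1}|A|$, and the fact that $B$ is a $K$-approximate group (so $|B^m|\leq K^{m-1}|B|$ for each $m$). The two ``inner'' inequalities, namely
\[
|L(A)\cap L(B)|\geq K^{-1}\,\frac{\min_{N\in L(A)}|R_N(A)|}{\max_{N\in L(A)}|R_N(A)|}\,|L(A)|
\quad\text{and}\quad
\max_{N\in L(A\cap B)}|R_N(A)\cap R_N(B)|\geq K^{-1}\min_{N\in L(A)}|R_N(A)|,
\]
should not use the approximate-group hypothesis at all: both follow from the first observation, that $|A\cap B|\leq |L(A\cap B)|\max_{N\in L(A\cap B)}|R_N(A\cap B)|$, applied to $A'=B$, combined with the trivial bounds $|A|=\sum_{N\in L(A)}|R_N(A)|\leq |L(A)|\max_N|R_N(A)|$ and $|A\cap B|\geq K^{-1}|A|$. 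For the first of these, write $|A\cap B|\leq |L(A\cap B)|\max_N|R_N(A\cap B)|\leq |L(A\cap B)|\max_{N\in L(A)}|R_N(A)|$, and bound $|A\cap B|\geq K^{-1}|A|\geq K^{-1}|L(A)|\min_{N\in L(A)}|R_N(A)|$; rearranging and noting $L(A\cap B)\subseteq L(A)\cap L(B)$ gives the claim. For the second, note $\max_N|R_N(A\cap B)|\geq |A\cap B|/|L(A\cap B)|\geq K^{-1}|A|/|L(A)|\geq K^{-1}\min_{N\in L(A)}|R_N(A)|$, and $R_N(A\cap B)=R_N(A)\cap R_N(B)$.

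For the two ``outer'' inequalities,
\[
|L(B)|\leq K^3|L(A)\cap L(B)|
\quad\text{and}\quad
\max_{N\in L(B)}|R_N(B)|\leq K^3\max_{N\in L(A\cap B)}|R_N(A)\cap R_N(B)|,
\]
I would feed the second observation into the first. For the $L$-inequality: applying the observation about $A^2$ to the set $B$ (which satisfies $B=B^{-1}$), there are at least $|L(B)|$ elements $M\in L(B^2)$ with $|R_M(B^2)|\geq \max_{N\in L(B)}|R_N(B)|\geq |B|/|L(B)|$; since $|B^2|\leq K|B|$, we get $|L(B)|\cdot (|B|/|L(B)|) \leq |B^2|\leq K|B|$... that only recovers a bound of the form $|L(B)|\le |L(B)|$, so instead the right move is: since $B=B^{-1}$, for every $M\in L(B)$ we have $|R_M(B^3)|\geq \max_{N\in L(B)}|R_N(B)|$, hence $K^2|B|\geq |B^3|\geq |L(B)|\max_{N\in L(B)}|R_N(B)|\geq |L(B)|\cdot(|B|/|L(B)|)$ again trivial---so the actual argument must go through $A\cap B$: use that $|A\cap B|\geq K^{-1}|A|\geq K^{-1}|L(A)\cap L(B)|^{-1}\cdots$ no. The correct path, which I expect to be the one intended, is to observe $|L(B)|\,\big(\min$-type quantity$\big)\leq |B^3|\leq K^2|B|\leq K^3|A|$ and separately $|A\cap B|\geq K^{-1}|A|$ forces $|L(A)\cap L(B)|$ to be large; chaining these with the relation $|A|\leq |L(A)|\max_{N\in L(A)}|R_N(A)|$ and $|A\cap B|\leq |L(A\cap B)|\max_N|R_N(A\cap B)|$ yields the $K^3$ loss. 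The $R$-inequality is handled symmetrically: for a fixed $M_0\in L(B)$ achieving $\max_{N\in L(B)}|R_N(B)|$, the product construction in the second observation shows $|R_{M_0 N_0}(B^2)|\geq \max_{N\in L(B)}|R_N(B)|$ for many $N_0$; combining $|B^2|\le K|B|\le K^2|A|$ with $|A\cap B|\ge K^{-1}|A|$ and the first observation applied to $A\cap B$ localizes a single row-set $R_N(A)\cap R_N(B)$ of size at least $K^{-3}\max_{N\in L(B)}|R_N(B)|$.

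The main obstacle is bookkeeping the exponents of $K$: each inequality should come out with exactly $K^3$ or $K^{-1}$, and the temptation is to be sloppy and land on $K^4$ or $K^{-2}$. Concretely, one has a budget of one factor of $K$ from $|A\cap B|\geq K^{-1}|A|$, and two factors of $K$ from passing $B\to B^2\to B^3$ via the approximate-group property (so $|B^3|\le K^2|B|$), plus one more from $|B|\leq K|A|$, and the inner inequalities must be stated with a bare $K^{-1}$, meaning the outer ones must absorb exactly the remaining three powers. I would organize the proof as four short displayed chains, each a single line of the form (quantity) $\le$ (observation) $\le$ (hypothesis substitution) $\le$ (final bound), being careful that $L(A\cap B)\subseteq L(A)\cap L(B)$ and $R_N(A\cap B)=R_N(A)\cap R_N(B)$ are invoked in the right direction, and that the $B^3$ version of the second observation (which requires $B=B^{-1}$, valid since $B$ is an approximate group) is used rather than the $B^2$ version wherever a lower bound on $R_M$ is needed for \emph{all} $M\in L(B)$ rather than merely many.
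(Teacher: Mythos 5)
Your handling of the second and fourth inequalities is correct and coincides with the paper's argument: compare the upper bound $|A\cap B|\leq|L(A\cap B)|\max_{N\in L(A\cap B)}|R_N(A)\cap R_N(B)|$ with the lower bound $|A\cap B|\geq K^{-1}|A|\geq K^{-1}|L(A)|\min_{N\in L(A)}|R_N(A)|$. But for the first and third inequalities you never actually produce an argument: both of your displayed attempts collapse (as you note yourself) because you replace $\max_{N\in L(B)}|R_N(B)|$ by the average $|B|/|L(B)|$, and the ``correct path'' you then gesture at is vague and quantitatively off. The missing idea is that the approximate-group hypothesis should be used to give a \emph{second lower bound for $|A\cap B|$ itself}, with the row-maximum of $B$ kept intact so it can be cancelled later:
\[
|A\cap B|\;\geq\;K^{-1}|A|\;\geq\;K^{-2}|B|\;\geq\;K^{-3}|B^2|\;\geq\;K^{-3}|L(B)|\max_{N\in L(B)}|R_N(B)|,
\]
where the last step is the second observation applied to $B$ (the $B^2$ version). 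Comparing this with the same upper bound $|A\cap B|\leq|L(A)\cap L(B)|\max_{N\in L(A\cap B)}|R_N(A)\cap R_N(B)|$ and using $\max_{N\in L(A\cap B)}|R_N(A)\cap R_N(B)|\leq\max_{N\in L(B)}|R_N(B)|$ gives $|L(B)|\leq K^3|L(A)\cap L(B)|$, while using $|L(A)\cap L(B)|\leq|L(B)|$ instead gives the third inequality. In other words, all four inequalities come from one upper bound and two lower bounds on the single quantity $|A\cap B|$; your sketch never pits the $B^2$ bound against the $A\cap B$ upper bound, which is the whole content of the outer inequalities.

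Two further corrections to your bookkeeping. First, the $B^3$ version of the second observation is not needed anywhere in this lemma (it is used later, in the proof of Proposition~\ref{main}); routing through $|B^3|\leq K^2|B|$ as in your sketch costs an extra factor of $K$ and would land you at $K^4$ rather than the stated $K^3$. Second, the quantity the second observation supplies is a lower bound for $|B^2|$ in terms of the \emph{maximum} $\max_{N\in L(B)}|R_N(B)|$, not the ``min-type quantity'' you invoke; the minimum only enters through the trivial bound $|A|\geq|L(A)|\min_{N\in L(A)}|R_N(A)|$ used for the inner inequalities.
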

\begin{proof}
By the remarks above, we have
\[
|A\cap B|\leq |L(A)\cap L(B)|\max_{N\in L(A\cap B)}|R_N(A)\cap R_N(B)|,
\]
\[
|A\cap B|\geq K\1|A|\geq K\1 |L(A)|\min_{N\in L(A)}|R_N(A)|,
\]
and
\[
|A\cap B|\geq K^{-3}|B^2|\geq K^{-3}|L(B)|\max_{N\in L(B)}|R_N(B)|,
\]
since $|A|\geq K\1 |B|\geq K^{-2}|B^2|$. The four inequalities follow by comparing the upper and lower bounds for $|A\cap B|$.
\end{proof}

Note that the conditions on $\I$ listed in Theorem~\ref{MT} imply that, for all $\xi\in\F_p^d\sm\{0\}$, we have $|\Stab_{H}(\xi)|\leq p^{-\delta\beta}|H|$ and, equivalently, $|H\xi|\geq p^{\delta\beta}$. Indeed, if $N\in \Stab_H(\xi)$, then $\xi\cdot N^Tv=\xi\cdot v$, so that $N^Tv$ is contained in the hyperplane $P=\{x\in\F_p^d:\xi\cdot x=\xi\cdot v\}$. Thus,
\[
|\Stab_H(\xi)|\leq|\Stab_{H^T}(v)||\I\cap P|\leq|\Stab_{H^T}(v)||\I|^{1-\beta}=|\I|^{-\beta}|H|.
\]
Since $|\I|\geq p^{\delta}$, this is bounded above by $p^{-\delta\beta}|H|$. The next lemma shows that if $B$ is an approximate group that is close to a translate of $A_\alpha$, then we also have a similar upper bound for $|L(B)\cap\Stab(\xi)|$:

\begin{lemma}\label{stab}
Suppose that $B\subset G$ is a $K$-approximate group satisfying $|B|\leq K|A_\alpha|$ and $|A_\alpha\cap gB|\geq K\1 |A_\alpha|$ for some $g\in G$. Then for all $\xi\in\F_p^d\sm\{0\}$, we have
\[
|L(B)\cap\Stab(\xi)|\leq p^{-\delta\beta}K^7|L(B)|.
\]
\end{lemma}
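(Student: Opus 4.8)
The plan is to transfer the stabilizer bound already established for $H = L(A_\alpha)$ to $L(B)$ by passing through the large intersection $A_\alpha \cap gB$. The key combinatorial point is that translating $A_\alpha$ by $g$ on the left shears the $L$-component by left-multiplication by $L(g)$, so $L(gB) = L(g)L(B)$ and, more to the point, $L(A_\alpha \cap gB) \subset H \cap L(g)L(B)$. The elements of $L(B)$ meeting $\Stab(\xi)$ will be controlled by elements of $H$ meeting a \emph{translate} of a stabilizer, namely $\Stab(L(g)^T\xi)$ or similar, which by $H$-invariance of the spectrum is no larger than a conjugate of $\Stab(\xi)$; in any case it has the same order.

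First I would record the ``fibering'' inequality in the other direction: for any $A, A' \subset G$, since each pair $(M,\xi)$ with $\binom{M\ \xi}{\ 1}\in A\cap A'$ contributes, one has
\[
|A \cap A'| \le |L(A\cap A')| \cdot \max_{N} |R_N(A\cap A')|,
\]
and applying this with $A = A_\alpha$, $A' = gB$ together with $|A_\alpha \cap gB| \ge K^{-1}|A_\alpha| = K^{-1}|H|\,|\Spec_\alpha(\I)|$ and the trivial bound $\max_N |R_N(A_\alpha \cap gB)| \le |R_N(A_\alpha)| \le |\Spec_\alpha(\I)|$ for $N \in H$, yields
\[
|L(A_\alpha \cap gB)| \ge K^{-1}|H|.
\]
Thus a $K^{-1}$-proportion of $H$ lies in $L(gB)= L(g)L(B)$. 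Next I would intersect with $\Stab(\xi)$: writing $S = L(B)\cap \Stab(\xi)$, I want to bound $|S|$ by comparing the number of elements of $H \cap L(g)L(B)$ lying in a suitable coset-type set. Concretely, $H \cap L(g) S$ consists of matrices $N = L(g)M$ with $M \in L(B)$, $M\xi = \xi$, $N \in H$; since $N\1 L(g) = M\1$ is determined, each such $N$ fixes $L(g)^{-1}\cdot(\text{something})$, and one checks $|H \cap L(g)S| \le |H \cap \Stab(\eta)|$ for an appropriate nonzero $\eta$ (a $GL_d$-translate of $\xi$), which by the computation in the paragraph before Lemma~\ref{stab} is at most $p^{-\delta\beta}|H|$. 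Since the $|H| \cdot K^{-1}$ elements of $H$ lying in $L(g)L(B)$ are partitioned by right cosets of $S$ inside $L(g)L(B)$, and each such coset meeting $H$ contributes a translate $H \cap L(g)MS$ of size controlled as above, a pigeonhole/averaging argument over cosets will give $|S| \le p^{-\delta\beta}K^{\text{const}}|L(B)|$; tracking the approximate-group losses ($|L(B)| \le K^3 |L(A_\alpha)\cap L(B)|$ type inequalities from Lemma~\ref{blocks}, plus the $K^{-1}$ above, plus $|B|\le K|A_\alpha|$) should land the exponent at $7$.

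The main obstacle I anticipate is the bookkeeping of which $\xi$'s stabilizer actually appears after the left-translation by $g$: the shear in $\Aff_d(\F_p)$ mixes the $R$-component into the story, so I must be careful that the relevant stabilizer in $H$ is of a \emph{nonzero} vector (so that the $p^{-\delta\beta}$ bound applies) and that the invariance of $\Spec_\alpha(\I)$ under $H$ — not $H^T$ — is used in the correct place. Once the right vector $\eta \neq 0$ is identified and the coset-counting is set up cleanly via Lemma~\ref{blocks} and the fibering inequalities, the rest is routine powers-of-$K$ arithmetic, and I would simply absorb all constants into $K^7$.
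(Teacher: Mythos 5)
Your first half is sound and is essentially the paper's first step: from $|A_\alpha\cap gB|\geq K^{-1}|A_\alpha|$ and the fibering inequality you get $|L(g)^{-1}H\cap L(B)|\geq K^{-1}|H|$, and since any coset $L(g)M\Stab(\xi)$ meets $H$ in at most $|\Stab_H(\xi)|\leq p^{-\delta\beta}|H|$ elements (if $N_1,N_2\in H\cap L(g)M\Stab(\xi)$ then $N_1^{-1}N_2\in H\cap\Stab(\xi)$, so no auxiliary vector $\eta$ is needed at all), it follows that at least $p^{\delta\beta}K^{-1}$ distinct cosets of $\Stab(\xi)$ meet $L(B)$. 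The gap is in how you propose to convert this into an upper bound on $S=L(B)\cap\Stab(\xi)$. Counting elements of $H$ inside the sets $L(g)MS$ controls how many elements of $H$ lie in each coset, not how many elements of $L(B)$ do; elements of $L(B)\cap\Stab(\xi)$ need not lie in $L(g)^{-1}H$ at all, so these counts say nothing about $|S|$. (Also, $S$ is not a subgroup, so ``right cosets of $S$'' do not partition $L(g)L(B)$; the partition is by cosets of $\Stab(\xi)$.) Indeed, everything you have established up to that point is consistent with $L(B)$ being an enormous subset of $\Stab(\xi)$ together with one representative in each of $p^{\delta\beta}K^{-1}$ other cosets, which would violate the conclusion; so no pigeonhole or averaging over cosets can close the argument from those facts alone.

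The missing idea is a growth step that uses the approximate-group property of $B$ \emph{multiplicatively}, not just through cardinality comparisons like $|L(B)|\leq K^3|L(A_\alpha)\cap L(B)|$. This is how the paper finishes: for every coset $M\Stab(\xi)$ meeting $L(B)$ one has $[L(B)\cap M\Stab(\xi)][L(B)\cap\Stab(\xi)]\subset L(B^2)\cap M\Stab(\xi)$, so each of the at least $p^{\delta\beta}K^{-1}$ such cosets contributes at least $|S|$ elements to $L(B^2)$, giving $|L(B^2)|\geq p^{\delta\beta}K^{-1}|S|$; on the other hand, Lemma~\ref{blocks} applied to $B$ and the $K^2$-approximate group $B^2$ gives $|L(B^2)|\leq K^6|L(B)|$, and comparing the two bounds yields $|S|\leq p^{-\delta\beta}K^{7}|L(B)|$ (this is exactly where the exponent $7=1+6$ comes from). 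Your sketch never multiplies $B$ by itself, so as written it cannot reach the stated inequality; adding this $B^2$ step would make your argument coincide with the paper's proof.
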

\begin{proof}
First, we will show that at least $p^{\delta\beta}K^{-1}$ cosets of $\Stab(\xi)$ have some representative in $L(B)$. Let $S\subset L(B)$ contain exactly one representative from each coset of $\Stab(\xi)$ that intersects $L(B)$. By the remarks before the statement of the lemma, we have
\begin{align*}
|L(g)\1 H\cap L(B)| &= \sum_{M\in S}|L(g)\1 H\cap M\Stab(\xi)\cap L(B)| \\
&\leq \sum_{M\in S}|H\cap L(g)M\Stab(\xi)| \\
&\leq p^{-\delta\beta}|H||S|.
\end{align*}
On the other hand, by Lemma~\ref{blocks} applied to $g\1 A_\alpha$ and $B$, we also have that $|L(g)\1 H\cap L(B)|\geq K\1 |H|$ since $|L(g\1 A_\alpha)|=|H|$ and $|R_M(g\1 A_\alpha)|=|\Spec_{\alpha}(\I)|$ for all $M\in L(g\1 A_\alpha)$. Comparing the upper and lower bounds for $|L(g)\1 H\cap L(B)|$ yields $|S|\geq p^{\delta\beta}K\1$.

Now, for each coset $M\Stab(\xi)$ intersecting $L(B)$, we have
\[
L(B^2)\cap M\Stab(\xi)\supset [L(B)\cap M\Stab(\xi)][L(B)\cap\Stab(\xi)],
\]
and as a consequence,
\[
|L(B^2)\cap M\Stab(\xi)|\geq |L(B)\cap\Stab(\xi)|.
\]
Since there are at least $p^{\delta\beta}K\1$ distinct cosets of $\Stab(\xi)$ with a representative in $L(B)$, this implies that
\[
|L(B^2)|\geq p^{\delta\beta}K\1|L(B)\cap\Stab(\xi)|.
\]
By Lemma~\ref{blocks} applied to $B$ and $B^2$, which is a $K^2$-approximate group, we also have $|L(B^2)|\leq K^6|L(B)|$. Combining this with the lower bound for $|L(B^2)|$ completes the proof of the lemma.
\end{proof}
One of the key ideas from~\cite{BGK} is that, if $\Spec_\alpha(\I)$ contains one nonzero frequency $\xi$, then it must contain at least $|H|=|H\xi|$ nonzero frequencies by the $H$-invariance of $\Spec_\alpha(\I)$. We, too, will use this fact in the proof of the following proposition, which is the main result of this section:
\begin{proposition}\label{main}
There exists a $k\ll_d 1$ such that the following holds. Let $1\gg_{d,\delta,\beta}\ve'>0$ and $p\gg_{d,\delta,\beta} 1$. Suppose that $B\subset G$ is a $O_{d,\ve'}(p^{O(\ve')})$-approximate group satisfying
\[
|B|\ll_{d,\ve'}p^{O(\ve')}|A_{\phi_d(\ve')}|
\]
and
\[
|A_{\phi_d(\ve')}\cap gB|\gg_{d,\ve'}p^{-O(\ve')}|A_{\phi_d(\ve')}|
\]
for some $g\in G$. If $\Spec_{\phi_d(\ve')}(\I)\neq\{0\}$, then for every $M\in L(B)$, there exists a $T_M\subset R_M(B^{4k})$ equal to a union of cosets of some nontrivial subspace of $\F_p^d$ such that $R_M(B^{3k})\subset T_M$ and $|R_M(B^{3k})|\gg_{d,\ve'}p^{-O_d(\ve')}|T_M|$.
\end{proposition}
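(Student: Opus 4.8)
The plan is to apply Theorem~\ref{GHPS} to the approximate group $B$ and use the semidirect-product structure of $G = \Aff_d(\F_p) = \F_p^d \rtimes \GL_d(\F_p)$ to pass from the subgroup filtration $H_1 \leq H_2$ produced by Theorem~\ref{GHPS} to a subspace structure on the fibers $R_M(B^{\text{power}})$.

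\medskip

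\emph{Step 1: Extract a subgroup filtration.} Apply Theorem~\ref{GHPS} to $A = B$ (which contains $I$ and satisfies $A = A\1$) with $C$ a suitable power of the approximation parameter $O_{d,\ve'}(p^{O(\ve')})$ — say $C = p^{c\ve'}$ for a constant $c$ to be chosen. Since $B$ is a $K$-approximate group with $K \ll_{d,\ve'} p^{O(\ve')}$, we have $|B^3| \leq K^2|B| \ll_{d,\ve'} p^{O(\ve')}|B|$, so for $\ve'$ small and $p$ large the alternative $|B^3| \geq C|B|$ cannot hold (choosing $c$ larger than the implied exponent). Hence we get $H_1 \leq H_2$ in $G$, both normal in $\langle B\rangle$ with $H_2/H_1$ nilpotent, an integer $k \ll_d 1$ with $H_1 \subseteq B^k$, and $|B^k \cap H_2| \geq C^{-O_d(1)}|B| \gg_{d,\ve'} p^{-O_d(\ve')}|B|$.

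\medskip

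\emph{Step 2: Identify $H_1$ with a subspace of $\F_p^d$.} This is where the semidirect-product structure enters, and I expect it to be the crux of the argument. Write $V = \F_p^d \trianglelefteq G$ for the translation subgroup (the elements with identity linear part). The claim is that $H_1$ must be contained in $V$, and in fact $H_1 = W$ for some subspace $W \leq \F_p^d$: indeed $H_1$ is normalized by $\langle B\rangle$, and in particular by $L(B)$ acting by conjugation; conjugation by $\begin{pmatrix} M & 0 \\ & 1\end{pmatrix}$ sends $\begin{pmatrix} N & \eta \\ & 1\end{pmatrix}$ to $\begin{pmatrix} MNM\1 & M\eta\end{pmatrix}$ (in the obvious block notation), so $L(H_1)$ is normalized by $L(B)$ and $H_1 \cap V$ is $L(B)$-invariant. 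To force $H_1 \subseteq V$ one uses that $\Spec_{\phi_d(\ve')}(\I) \neq \{0\}$: by the $H$-invariance of $\Spec$, as recorded in the discussion before the proposition, $\Spec_{\phi_d(\ve')}(\I)$ then contains an $H$-orbit of size $\geq p^{\delta\beta}$; combined with Lemma~\ref{stab} this shows $L(B)$ does not stabilize any nonzero vector too heavily, which prevents $H_1$ from projecting onto a nontrivial subgroup of $\GL_d(\F_p)$ of the relevant size — here one plays the size bound $|H_1| \leq |B^k| \ll_{d,\ve'} p^{O(\ve')}|\Spec_{\phi_d(\ve')}(\I)||H|$ against the structure of $\langle B\rangle$. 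The nilpotency of $H_2/H_1$ and the fact that $B^k \supseteq H_1$ has large relative size in $B^k \cap H_2$ are used to pin down that $H_1$ is nontrivial: if $H_1 = \{1\}$ then $H_2 = B^k \cap H_2$ essentially and $B^k$ would have a large nilpotent piece, again contradicting the non-stabilizer bound from Lemma~\ref{stab} applied to the frequency $\xi$ furnished by $\Spec_{\phi_d(\ve')}(\I) \neq \{0\}$. Thus $H_1 = W$ for a nontrivial subspace $W \leq \F_p^d$.

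\medskip

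\emph{Step 3: Transfer the subspace structure to the fibers.} Now fix $M \in L(B)$ and pick any $g_M = \begin{pmatrix} M & \eta_M \\ & 1\end{pmatrix} \in B$ with $R(g_M) = \eta_M \in R_M(B)$. Since $H_1 = W \subseteq B^k$, the coset $g_M H_1 = g_M W$ is contained in $B^{k+1}$, and its upper-right entries are exactly $\eta_M + M W$ — a coset of the subspace $MW$. More generally, for any $\eta \in R_M(B^{3k})$, writing the corresponding element as $g$ and noting $g H_1 = g W \subseteq B^{3k} \cdot B^k = B^{4k}$, we see that $R_M(B^{4k})$ contains the coset $\eta + MW$. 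Therefore $T_M := \bigcup_{\eta \in R_M(B^{3k})} (\eta + MW)$ is a union of cosets of the nontrivial subspace $MW$, it is contained in $R_M(B^{4k})$, and $R_M(B^{3k}) \subseteq T_M$ by construction. For the lower bound $|R_M(B^{3k})| \gg_{d,\ve'} p^{-O_d(\ve')}|T_M|$: since $T_M$ is a union of cosets of $MW$ each meeting $R_M(B^{3k})$, we have $|T_M| \leq |MW| \cdot \#\{\text{cosets}\} \leq |MW| \cdot |R_M(B^{3k})|$, which is too weak; instead one argues that within each coset $\eta + MW$, a positive proportion (of size $\gg_{d,\ve'} p^{-O_d(\ve')}|MW|$) of the points already lie in $R_M(B^{3k})$. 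This follows because $g_\eta H_1 = g_\eta W$ lies in $B^{k+1} \subseteq B^{3k}$ (as $k \ll_d 1$ we may enlarge $3k$ if needed, absorbing constants into $k$), so in fact the \emph{entire} coset $\eta + MW$ lies in $R_M(B^{3k})$ whenever $\eta \in R_M(B)$; more carefully, one takes $\eta$ ranging over representatives of cosets of $MW$ meeting $R_M(B)$ — by Lemma~\ref{blocks} there are $\gg_{d,\ve'} p^{-O_d(\ve')}|L(B)|$-many such $M$ with $|R_M(B)|$ large, and for those $R_M(B^{3k}) \supseteq$ a full union of $MW$-cosets, giving $T_M = R_M(B^{3k})$ up to the claimed loss. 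Re-indexing $k$ to absorb the various fixed multiples of $k$ that appear completes the proof.
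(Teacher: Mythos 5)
Your overall architecture matches the paper's: apply Theorem~\ref{GHPS} to $B$, use the semidirect-product structure of $\Aff_d(\F_p)$ to locate a nontrivial subspace of translations inside $H_1\subset B^k$, and then define $T_M$ by saturating a fiber with cosets of that subspace. However, your Step 2, which is the heart of the proof, has a genuine gap. You assert that $H_1$ is contained in the translation subgroup and in fact equals a subspace $W$; nothing in Theorem~\ref{GHPS} forces this, it is stronger than what is needed, and your argument for even the weaker (and essential) claim that $H_1$ contains a nontrivial translation is not an argument. Saying that ``$B^k$ would have a large nilpotent piece, contradicting the non-stabilizer bound from Lemma~\ref{stab}'' does not work: $\Aff_d(\F_p)$ contains huge nilpotent subgroups (e.g.\ the full translation group $G'$, or a Sylow $p$-subgroup), so largeness plus nilpotency is not by itself contradictory. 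The actual mechanism, which your sketch omits, is a three-step argument: (a) a counting argument using $|\Spec_{\phi_d(\ve')}(\I)|\geq p^{\delta\beta}$ (from $H$-invariance and $\Spec\neq\{0\}$) shows $H_2\cap G'$ is nontrivial, since otherwise $|R_M(H_2)|\leq 1$ for all $M$ and $|B^k\cap H_2|$ would be too small; (b) Lemma~\ref{stab} plus the approximate-group bounds show $L(H_2)\not\subset\Stab(\xi)$ for every $\xi\neq 0$; (c) the commutator identity $\bigl[\begin{pmatrix} I & \xi \\ & 1\end{pmatrix},\begin{pmatrix} N & \eta \\ & 1\end{pmatrix}\bigr]=\begin{pmatrix} I & (I-N)\xi \\ & 1\end{pmatrix}$ then shows every term of the lower central series of $H_2$ meets $G'$ nontrivially, so if $H_1\cap G'$ were trivial the lower central series of $H_2/H_1$ would never terminate, contradicting nilpotency. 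Without (c) there is no way to convert the nilpotency of $H_2/H_1$ into the conclusion that $H_1$ contains a nontrivial subspace $V$ of translations; this is the missing key idea.

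Your Step 3 also does not establish the stated density bound. The proposition requires $|R_M(B^{3k})|\gg_{d,\ve'}p^{-O_d(\ve')}|T_M|$ for \emph{every} $M\in L(B)$, but after correctly noting that the trivial coset count is too weak, your fix only addresses cosets meeting $R_M(B)$ and then retreats to ``many $M$ with $|R_M(B)|$ large,'' which neither covers the cosets of $T_M$ coming from $\eta\in R_M(B^{3k})\setminus R_M(B)$ nor handles an arbitrary fixed $M$. The clean route is the one the paper takes: $|T_M|\leq\max_{N\in L(B^{4k})}|R_N(B^{4k})|\ll_{d,\ve'}p^{O_d(\ve')}\max_{N\in L(B)}|R_N(B)|$ by Lemma~\ref{blocks} (applied with $B^{4k}$ as the approximate group, using $k\ll_d 1$), combined with the observation that since $B=B\1$ one has $\max_{N\in L(B)}|R_N(B)|\leq |R_M(B^{3})|\leq|R_M(B^{3k})|$ for every $M\in L(B)$; this last symmetric-set inequality is exactly what lets you pass from a maximum over $N$ to the specific fiber at $M$, and it is absent from your write-up. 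Finally, if you do not insist on the unproved claim $H_1=W$, your construction should take $T_M:=R_M(B^{3k})+V$ with $V\subset R_I(H_1)\subset R_I(B^k)$ (multiplying by translations, so no conjugation twist is needed), which is both sufficient and provable.
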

\begin{proof}
Apply Theorem~\ref{GHPS} to $B$. This yields subgroups $H_1,H_2\nsg\langle B\rangle$ such that $H_1\leq H_2$, $H_2/H_1$ is nilpotent, $B^k\supset H_1$, and $|B^k\cap H_2|\gg_{d,\ve'}p^{-O_d(\ve')}|B|\gg_{d,\ve'}p^{-O_d(\ve')}|A_{\phi_d(\ve')}|$, where $k\ll_d 1$. We now show that $R_I(H_1)$ must contain a nontrivial subspace of $\F_p^d$ if $\ve'$ is sufficiently small and $p$ is sufficiently large in terms of $d,\delta,$ and $\beta$. So, set
\[
G':=\begin{pmatrix} I & \F_p^d \\ & 1\end{pmatrix}\subset G,
\]
and assume that $\Spec_{\phi_d(\ve')}(\I)\neq\{0\}$ and $H_1\cap G'$ is trivial.

First, we will show that $H_2$ contains a nonidentity element of $G'$ when $\ve'$ is sufficiently small and $p$ is sufficiently large. If $H_2\cap G'$ were trivial, then clearly $|R_M(H_2)|\leq 1$ for every $M\in \GL_d(\F_p)$ since $H_2$ is a subgroup. This would imply that
\[
|B^k\cap H_2|\leq |L(B^k)|\ll_{d,\ve'}p^{O(\ve')}|H|
\]
by Lemma~\ref{blocks}. On the other hand, since $|A_{\phi_d(\ve')}|=|H||\Spec_{\phi_d(\ve')}(\I)|$, we have $|B^k\cap H_2|\gg_{d,\ve'}p^{-O_d(\ve')}|H||\Spec_{\phi_d(\ve')}(\I)|$. Comparing the upper and lower bounds for $|B^k\cap H_2|$ yields $|\Spec_{\phi_d(\ve')}(\I)|\ll_{d,\ve'}p^{O_d(\ve')}$. However, we must have $|\Spec_{\phi_d(\ve')}(\I)|\geq p^{\delta\beta}$, for $\Spec_{\phi_d(\ve')}(\I)$ is $H$-invariant and contains a nonzero element $\xi$, and thus contains every element of $H\xi$. This implies that $p^{\delta\beta}\ll_{d,\ve'}p^{O_d(\ve')}$, which is impossible for $\ve'\ll_{d,\delta,\beta} 1$ and $p\gg_{d,\delta,\beta}1$. 

Next, we will show that, when $\ve'$ is sufficiently small and $p$ is sufficiently large, there must exist an $M\in L(H_2)\sm\Stab(\xi)$ for every $\xi\in\F_p^d\sm\{0\}$. If there were no such $M$ for some nonzero $\xi$, then we would have $L(H_2)\subset\Stab(\xi)$, and thus
\[
|B^k\cap H_2|\leq |L(B^k)\cap\Stab(\xi)|\max_{N\in L(B^k)}|R_N(B^k)|.
\]
By Lemma~\ref{stab}, the above is $\ll_{d,\ve'}p^{-\delta\beta+O_d(\ve')}|L(B^k)|\max_{N\in L(B^k)}|R_N(B^k)|$. On the other hand,
\[
|B^k\cap H_2|\gg_{d,\ve'}p^{-O_d(\ve')}|B^{2k}|\gg_{d,\ve'}p^{-O_d(\ve')}|L(B^k)|\max_{N\in L(B^k)}|R_N(B^k)|,
\]
since $B$ is a $O_{d,\ve'}(p^{O(\ve')})$-approximate group. Comparing the upper and lower bounds for $|B^k\cap H_2|$ again yields the inequality $p^{\delta\beta}\ll_{d,\ve'}p^{O_d(\ve')}$. For $\ve'\ll_{d,\delta,\beta} 1$ and $p\gg_{d,\delta,\beta} 1$, this is impossible.

Now assume that $\ve'$ is sufficiently small and $p$ is sufficiently large so that $H_2\cap G'$ contains a nonidentity element and $L(H_2)\sm\Stab(\xi)$ is nonempty for every $\xi\in\F_p^d\sm\{0\}$. Then there exist
\[
\begin{pmatrix} I & \xi \\ & 1 \end{pmatrix},\begin{pmatrix} N & \eta \\ & 1\end{pmatrix}\in H_2
\]
such that $\xi\neq 0$ and $N\xi\neq\xi$. The commutator of these two elements is
\[
\left[\begin{pmatrix} I & \xi \\ & 1 \end{pmatrix},\begin{pmatrix} N & \eta \\ & 1\end{pmatrix}\right]=\begin{pmatrix} I & (I-N)\xi \\ & 1\end{pmatrix}.
\]
Thus, since $(I-N)\xi\neq 0$, the group $[H_2,H_2]$ contains a nonidentity element of $G'$. There also exists an $N'\in L(H_2)$ such that $(I-N')(I-N)\xi\neq 0$, so that $[H_2,[H_2,H_2]]$ contains a nonidentity element of $G'$ for the same reason that $[H_2,H_2]$ does. Repeating this argument shows that every term in the lower central series for $H_2$ contains a nonidentity element of $G'$. Taking the quotient by $H_1$, which we assumed has trivial intersection with $G'$, we see that the lower central series for $H_2/H_1$ never terminates. This contradicts the fact that $H_2/H_1$ is nilpotent.

Hence, for $1\gg_{d,\delta,\beta}\ve'>0$ and $p\gg_{d,\delta,\beta} 1$, if $\Spec_{\phi_d(\ve')}(\I)\neq\{0\}$, then $R_I(H_1)\neq\{0\}$, and in fact $R_I(H_1)$ contains an entire nontrivial subspace $V$ of $\F_p^d$ since $H_1$ is a subgroup. Because $B^k\supset H_1$, we have that $R_I(B^k)$ contains $V$ as well.

Let $M\in L(B)$ and set $T_M:= R_M(B^{3k})+V\subset R_M(B^{4k})$. Because $k\ll_d 1$, Lemma~\ref{blocks} applied to $B^{4k}$ and $B$ tells us that
\[
\max_{N\in L(B^4k)}|R_N(B^{4k})|\ll_{d,\ve'}p^{O_d(\ve')}\max_{N\in L(B)}|R_N(B)|.
\]
We also have $\max_{N\in L(B)}|R_N(B)|\leq |R_M(B^{3k})|$ since $B=B\1$. Thus,
\[
|T_M|\leq\max_{N\in L(B^{4k})}|R_N(B^{4k})|\ll_{d,\ve'}p^{O_d(\ve')}|R_M(B^{3k})|,
\]
and clearly $R_M(B^{3k})\subset T_M$. This completes the proof of the proposition.
\end{proof}

\section{Proof of Theorem~\ref{MT}}\label{proof}

In order to derive a contradiction when $\Spec_{\phi_d(\ve')}(\I)\neq\{0\}$, we will need one last lemma.

\begin{lemma}\label{spec}
For every $0<\alpha\leq 1$, if $\eta\in\F_p^d$ and $V\leq\F_p^d$ is a nontrivial subspace, then
\[
|\Spec_{\alpha}(\I)\cap(\eta+V)|\leq\alpha^{-2}p^{-\delta\beta}|V|.
\]
\end{lemma}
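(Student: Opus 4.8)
The plan is to run a standard $L^2$ energy (Plancherel-type) argument: I will dualize the assertion that $\eta+V$ contains many frequencies with large Fourier coefficient into an upper bound on how much mass $\I$ places on cosets of $V^\perp$, and then invoke the hyperplane hypothesis on $\I$.

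First I would note that every $\xi\in\Spec_\alpha(\I)\cap(\eta+V)$ contributes more than $\alpha^2$ to $\sum_{\xi\in\eta+V}\left|\f{1}{|\I|}\sum_{x\in\I}e_p(\xi\cdot x)\right|^2$, so this sum bounds $\alpha^2|\Spec_\alpha(\I)\cap(\eta+V)|$ from above. Expanding the square, interchanging the order of summation, and writing $\xi=\eta+w$ with $w$ ranging over $V$, the inner sum becomes $\sum_{w\in V}e_p(w\cdot(x-y))$, which by orthogonality of additive characters on $V$ equals $|V|$ if $x-y\in V^\perp:=\{z\in\F_p^d:w\cdot z=0\text{ for all }w\in V\}$ and $0$ otherwise. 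The residual phase $e_p(\eta\cdot(x-y))$ is harmless, so taking absolute values gives
\[
\alpha^2|\Spec_\alpha(\I)\cap(\eta+V)|\leq\f{|V|}{|\I|^2}\bigl|\{(x,y)\in\I\times\I:x-y\in V^\perp\}\bigr|=\f{|V|}{|\I|^2}\sum_{c}|\I\cap(c+V^\perp)|^2,
\]
where $c$ runs over the cosets of $V^\perp$ in $\F_p^d$.

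The decisive point is that $V$ is nontrivial, so $V^\perp$ has codimension at least $1$ and is therefore contained in a linear hyperplane; hence each coset $c+V^\perp$ lies inside an affine hyperplane $P$. The hypothesis $|\I\cap P|\leq|\I|^{1-\beta}$ then gives $|\I\cap(c+V^\perp)|\leq|\I|^{1-\beta}$ for every $c$, and combining this with $\sum_c|\I\cap(c+V^\perp)|=|\I|$ yields $\sum_c|\I\cap(c+V^\perp)|^2\leq|\I|^{2-\beta}$. Plugging this into the displayed inequality and using $|\I|\geq p^\delta$ gives $\alpha^2|\Spec_\alpha(\I)\cap(\eta+V)|\leq|V|\,|\I|^{-\beta}\leq|V|p^{-\delta\beta}$, which is exactly the claimed bound.

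I do not anticipate a real obstacle here; the whole argument is a few lines once the energy sum is set up. The only step requiring a moment's care is the reduction ``$V$ nontrivial $\Rightarrow$ $V^\perp$ lies in a hyperplane'', which is precisely what makes the hyperplane assumption on $\I$ applicable (and it covers the degenerate case $V=\F_p^d$, where $V^\perp=\{0\}$, uniformly). Note also that the $H$-invariance of $\Spec_\alpha(\I)$ plays no role in this lemma; only the second bullet of Theorem~\ref{MT} enters.
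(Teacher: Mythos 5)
Your proof is correct and is essentially the paper's argument: both bound $\alpha^2|\Spec_\alpha(\I)\cap(\eta+V)|$ by the $L^2$ sum over the coset $\eta+V$, use orthogonality of characters on $V$ to reduce to counting pairs $(x,y)\in\I\times\I$ with $x-y\in V^\perp$, and then apply the hyperplane hypothesis (valid since $\dim V>0$ forces $V^\perp$ into a hyperplane) together with $|\I|\geq p^\delta$. The only difference is presentational — the paper packages the expansion as Parseval for the function $f(v)=\f{1}{|\I|}\sum_{x\in\I}e_p(x\cdot(\eta+v))$ on $V$, whereas you expand the square directly — which is the same computation.
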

\begin{proof}
Define $f:V\to\C$ by
\[
f(v)=\f{1}{|\I|}\sum_{x\in\I}e_p(x\cdot(\eta+v))
\]
for all $v\in V$. Then
\[
\sum_{v\in V}|f(v)|^2\geq\alpha^2|\Spec_\alpha(\I)\cap (\eta+V)|.
\]
We have, for each $\xi\in\F_p^d/V^\perp$,
\begin{align*}
\hat{f}(\xi) &= \f{1}{|V|}\sum_{v\in V}\left(\f{1}{|\I|}\sum_{x\in\I}e_p(x\cdot(\eta+v))\right)e_p(\xi\cdot v) \\
&= \f{1}{|\I||V|}\sum_{x\in\I}e_p(x\cdot\eta)\sum_{v\in V}e_p((x+\xi)\cdot v) \\
&= \f{1}{|\I|}\sum_{x\in\I}e_p(x\cdot\eta)1_{V^\perp}(x+\xi).
\end{align*}
Hence,
\begin{align*}
\sum_{\xi\in\F_p^d/V^\perp}|\hat{f}(\xi)|^2 &= \f{1}{|\I|^2}\sum_{x,y\in\I}e_p((x-y)\cdot\eta)\sum_{\xi\in\F_p^d/V^\perp}1_{V^\perp}(x+\xi)1_{V^\perp}(y+\xi) \\
&= \f{1}{|\I|^2}\sum_{x,y\in\I}e_p((x-y)\cdot\eta)1_{V^\perp}(x-y) \\
&\leq \f{1}{|\I|^2}\sum_{x\in\I}|\{y\in\I:y\in x+V^\perp\}| \\
&\leq |\I|^{-\beta}
\end{align*}
by second condition on $\I$ in the statement of Theorem~\ref{MT}, because, as $\dim V>0$, the set $\{y\in\I:y\in x+V^\perp\}$ is contained in the intersection of $\I$ with a hyperplane. By Parseval's identity, it thus follows that
\[
\alpha^2|\Spec_\alpha(\I)\cap (\eta+V)| \leq \sum_{v\in V}|f(v)|^2=|V|\sum_{\xi\in\F_p^d/V^\perp}|\hat{f}(\xi)|^2\leq p^{-\delta\beta}|V|,
\]
since $|\I|\geq p^{\delta}$.
\end{proof}
Assume that $\ve'>0$ is sufficiently small and $p$ is sufficiently large so that Proposition~\ref{main} holds. Let $B$ be the $O_{d,\ve'}(p^{O(\ve')})$-approximate group given to us by Proposition~\ref{P}. For each $M\in L(B)$, write
\[
T_M=\coprod_{\xi\in F}(\xi+V)
\]
where $F\subset\F_p^d$ and $V\leq\F_p^d$ with $\dim V>0$. Then by Lemma~\ref{spec}, we have
\begin{align*}
|L(g)\1\Spec_{\phi_d(\ve')}(\I)\cap T_M| &= \sum_{\xi\in F}|\Spec_{\phi_d(\ve')}(\I)\cap L(g)(\xi+V)| \\ 
&\leq \phi_d(\ve')^{-2}p^{-\delta\beta}|F||V|.
\end{align*}
Thus, $|L(g)\1\Spec_{\phi_d(\ve')}(\I)\cap T_M|\leq \phi_d(\ve')^{-2}p^{-\delta\beta}|T_M|$. Applying Lemma~\ref{blocks} to $B$ and $B^{4k}$ yields
\[
\max_{N\in L(B)}|T_N|\leq\max_{N\in L(B^{4k})}|R_N(B^{4k})|\ll_{d,\ve'}p^{O_d(\ve')}\max_{N\in L(B)}|R_N(B)|,
\]
and applying Lemma~\ref{blocks} to $g\1 A_{\phi_d(\ve')}$ and $B$ yields
\[
\max_{N\in L(B)}|R_N(B)|\ll_{d,\ve'}p^{O(\ve')}\max_{N\in L(B)}|L(g)\1\Spec_{\phi_d(\ve')}(\I)\cap T_N|,
\]
since $R_M(B)\subset R_M(B^{3k})\subset T_M$ for all $M\in L(B)$. However, because $|L(g)\1\Spec_{\phi_d(\ve')}(\I)\cap T_M|\leq \phi_d(\ve')^{-2}p^{-\delta\beta}|T_M|$ for all $M\in L(B)$, we thus have the bound
\[
\max_{N\in L(B)}|T_N|\ll_{d,\ve'}\phi_d(\ve')^{-2}p^{-\delta\beta+O_d(\ve')}\max_{N\in L(B)}|T_N|.
\]
Since $\phi_d(\ve')^{-1}\ll_{d,\ve'}p^{\ve'}$, this yields the inequality $p^{\delta\beta}\ll_{d,\ve'}p^{O_d(\ve')}$, which is impossible for $1\gg_{d,\delta,\beta} \ve'>0$ and $p\gg_{d,\delta,\beta} 1$. Thus, there exists an $\ve'>0$ depending only on $d,\delta,$ and $\beta$ such that $\Spec_{\phi_d(\ve')}(\I)=\{0\}$ for all $p$ sufficiently large. As $\phi_d(\ve')=2^{1-c_{d,\ve'}}p^{-\ve'c_{d,\ve'}/2^{\lceil 2d/\ve'\rceil}}$ for some $1\leq c_{d,\ve'}\leq 2^{\lceil 2d/\ve'\rceil}$, this means that there exists an $\ve''>0$ (depending only on $d,\delta,$ and $\beta$) such that $\Spec_{p^{-\ve''}}(\I)=\{0\}$ for $p$ large enough (depending only on $d,\delta,$ and $\beta$). The bound (\ref{p}) is trivial for $p$ smaller than a fixed constant depending at most on $d,\delta,$ and $\beta$, so this clearly implies Theorem~\ref{MT}.
\bibliographystyle{plain}
\bibliography{exp}

\begin{thebibliography}{10}

\bibitem{B2}
J.~Bourgain.
\newblock Exponential sum estimates over subgroups of {$\mathbb{Z}^*_q$}, {$q$}
  arbitrary.
\newblock {\em J. Anal. Math.}, 97:317--355, 2005.

\bibitem{B1}
J.~Bourgain.
\newblock Mordell's exponential sum estimate revisited.
\newblock {\em J. Amer. Math. Soc.}, 18(2):477--499, 2005.

\bibitem{B3}
J.~Bourgain.
\newblock Exponential sum estimates in finite commutative rings and
  applications.
\newblock {\em J. Anal. Math.}, 101:325--355, 2007.

\bibitem{BC}
J.~Bourgain and M.~Chang.
\newblock A {G}auss sum estimate in arbitrary finite fields.
\newblock {\em C. R. Math. Acad. Sci. Paris}, 342(9):643--646, 2006.

\bibitem{BGK}
J.~Bourgain, A.~A. Glibichuk, and S.~V. Konyagin.
\newblock Estimates for the number of sums and products and for exponential
  sums in fields of prime order.
\newblock {\em J. London Math. Soc. (2)}, 73(2):380--398, 2006.

\bibitem{Br}
E.~Breuillard.
\newblock A brief introduction to approximate groups.
\newblock In {\em Thin groups and superstrong approximation}, volume~61 of {\em
  Math. Sci. Res. Inst. Publ.}, pages 23--50. Cambridge Univ. Press, Cambridge,
  2014.

\bibitem{BrGT1}
E.~Breuillard, B.~Green, and T.~Tao.
\newblock Approximate subgroups of linear groups.
\newblock {\em Geom. Funct. Anal.}, 21(4):774--819, 2011.

\bibitem{BrGT2}
E.~Breuillard, B.~Green, and T.~Tao.
\newblock The structure of approximate groups.
\newblock {\em Publ. Math. Inst. Hautes \'Etudes Sci.}, 116:115--221, 2012.

\bibitem{D}
O.~Dinai.
\newblock Growth in {${\rm SL}_2$} over finite fields.
\newblock {\em J. Group Theory}, 14(2):273--297, 2011.

\bibitem{GH}
N.~Gill and H.~A. Helfgott.
\newblock Growth in solvable subgroups of {${\rm
  GL}_r(\mathbb{Z}/p\mathbb{Z})$}.
\newblock {\em Math. Ann.}, 360(1-2):157--208, 2014.

\bibitem{G}
B.~Green.
\newblock Approximate groups and their applications: work of {B}ourgain,
  {G}amburd, {H}elfgott and {S}arnak.
\newblock {\em preprint}, 2009.
\newblock arXiv 1008.5264.

\bibitem{H1}
H.~A. Helfgott.
\newblock Growth and generation in {${\rm SL}_2(\mathbb{Z}/p\mathbb{Z})$}.
\newblock {\em Ann. of Math. (2)}, 167(2):601--623, 2008.

\bibitem{H2}
H.~A. Helfgott.
\newblock Growth in {${\rm SL}_3(\mathbb{Z}/p\mathbb{Z})$}.
\newblock {\em J. Eur. Math. Soc. (JEMS)}, 13(3):761--851, 2011.

\bibitem{H3}
H.~A. Helfgott.
\newblock Growth in groups: ideas and perspectives.
\newblock {\em Bull. Amer. Math. Soc. (N.S.)}, 52(3):357--413, 2015.

\bibitem{MPPP}
B.~Murphy, S.~Peluse, V.~Petkov, and L.~Pham.
\newblock Exponential sums over subgroups of $\mathbb{F}_p^\times$ and growth
  in groups.
\newblock {\em preprint}, 2016.

\bibitem{PS}
L.~Pyber and E.~Szab{\'o}.
\newblock Growth in finite simple groups of {L}ie type.
\newblock {\em J. Amer. Math. Soc.}, 29(1):95--146, 2016.

\bibitem{T1}
T.~Tao.
\newblock Product set estimates for non-commutative groups.
\newblock {\em Combinatorica}, 28(5):547--594, 2008.

\bibitem{TV}
T.~Tao and V.~Vu.
\newblock {\em Additive combinatorics}, volume 105 of {\em Cambridge Studies in
  Advanced Mathematics}.
\newblock Cambridge University Press, Cambridge, 2006.

\end{thebibliography}
%\bibliography{expsum}

\end{document}